\newtheorem{thm}{Theorem}[section]
\newtheorem{prop}[thm]{Proposition}
\newtheorem{lem}[thm]{Lemma}
\newtheorem{cor}[thm]{Corollary}
\newtheorem{conjecture}[thm]{Conjecture}
\numberwithin{equation}{section}
\theoremstyle{definition}
\newtheorem{definition}[thm]{Definition}
\newtheorem{remark}[thm]{Remark}
\newcommand{\Pic}{{\rm Pic}}
\renewcommand{\dim}{{\rm dim}\,}
\newcommand{\QQ}{\mathbb{Q}}
\newcommand{\RR}{\mathbb{R}}
\newcommand{\CC}{\mathbb{C}}
\begin{document}

\title{Varieties Fibered by Good Minimal Models}

\author[Ching-Jui\ Lai]{Ching-Jui Lai}

\address{C-J.L.: Department of Mathematics, University of Utah, 155 South 1400 East, Salt Lake City, UT 84112, USA}
\email{lai@math.utah.edu}

\keywords{Good minimal models}

\subjclass[1991]{14E30}

\begin{abstract}
Let $f:X\rightarrow Y$ be an algebraic fiber space such that the general fiber has a good minimal model. We show that if $f$ is the Iitaka fibration or if $f$ is the Albanese map with relative dimension no more than three, then $X$ has a good minimal model.
\end{abstract}

\maketitle

Two of the main conjectures in higher dimensional birational geometry are:
\begin{itemize}
\item {\bf Existence of minimal models (Mori's program)}: Its aim is to provide a nice representative, \emph{a minimal model}, in the birational class of a given variety $X$. A minimal model is required to have a nef canonical divisor and hence it is the ``simplest'' one in its birational class.
\item {\bf Abundance conjecture}: Any minimal model has semiample canonical system so that the $m$-th canonical system is base point free for some $m>0$.
\end{itemize}
If for a variety we can find a minimal model such that abundance holds, then we say this variety has a \emph{good minimal model}. Existence of a good minimal model has been established in several cases. Amongst these
  \begin{itemize}
    \item  ${\rm dim}(X)\leq 3$ by S. Mori, Y. Kawamata, and others,
    \item  varieties of general type by \cite{BCHM}, and
    \item  maximal Albanese dimensional varieties by \cite{OF}.
  \end{itemize}
In this paper we prove the existence of good minimal models in the following cases:
  \begin{itemize}
    \item  The general fiber of the Iitaka fibration of a smooth projective variety $X$ has a good minimal model (Theorem \ref{IF}).
    \item  The general fiber of the Albanese morphism of a smooth projective variety $X$ has dimension no more than three (Theorem \ref{Main}).
  \end{itemize}

\smallskip

The original motivation for this paper is to study {\bf Ueno's Conjecture C} (\cite[\S11]{U}).
\begin{conjecture} If $f:X^n\rightarrow Y^m$ is an algebraic fiber space of smooth projective varieties with general fiber $F$, then we have
    \begin{itemize}
       \item $C_{n,m}:$ $\kappa(X)\geq\kappa(F)+\kappa(Y)$, and
       \item $C_{n,m}^+:$ $\kappa(X)\geq\kappa(F)+{\rm Max}\{{\rm Var}(f),\kappa(Y)\}$ if $\kappa(Y)\geq0$, where ${\rm Var}(f)$ is the variation of $f$ $(\mbox{cf. \cite[\S6 and \S7]{M}})$.
    \end{itemize}
\end{conjecture}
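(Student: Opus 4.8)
The plan is to prove both inequalities by reducing the problem to the geometry of a \emph{relative} good minimal model, which the fiber hypothesis supplies, and then to feed the resulting relative semiampleness into the positivity theory of direct image sheaves.

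First I would dispose of the trivial cases: if $\kappa(Y)=-\infty$ then $C_{n,m}$ holds vacuously, and for $C_{n,m}^+$ we are given $\kappa(Y)\geq 0$, so I assume this throughout. Since the general fiber $F$ has a good minimal model, I would run a relative minimal model program for $f$ (relative MMP with scaling, together with relative abundance over the generic point) in order to replace $X$ by a relative good minimal model $\overline{X}\to Y$; this changes none of $\kappa(X)$, $\kappa(F)$, ${\rm Var}(f)$, and arranges that $K_{\overline{X}/Y}$ is $f$-semiample. For $\ell$ sufficiently divisible the restriction of $\ell K_{\overline{X}/Y}$ to a general fiber is then base point free and computes $\kappa(F)$, so that $f_*\omega_{\overline{X}/Y}^{\otimes \ell}$ has rank growing like $\ell^{\kappa(F)}$.

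The second step is positivity. By Viehweg's weak positivity theorem $f_*\omega_{\overline{X}/Y}^{\otimes \ell}$ is weakly positive, and I would combine this with the $f$-semiampleness of Step 1 to produce, for any ample $H$ on $Y$, enough sections of $\ell(K_{\overline{X}/Y}+f^*H)$ to account for the fiber contribution $\kappa(F)$. To convert the base contribution $\kappa(Y)$ into genuine sections of $\ell K_{\overline{X}}$ I would reduce to the case that $Y$ is of general type, where $C_{n,m}$ is a theorem of Viehweg; the reduction is the standard one, composing $f$ with the Iitaka fibration of $Y$ and applying additivity over the new, general-type base. This yields $\kappa(X)=\kappa(\overline{X})\geq\kappa(F)+\kappa(Y)$, establishing $C_{n,m}$.

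For $C_{n,m}^+$ the same skeleton applies, but the base contribution $\kappa(Y)$ must be strengthened to $\max\{{\rm Var}(f),\kappa(Y)\}$, and this is where I expect the real difficulty. Capturing the ${\rm Var}(f)$ term requires that $\det f_*\omega_{\overline{X}/Y}^{\otimes \ell}$ be big whenever the variation is maximal, a moduli-positivity statement (Viehweg's conjecture) that is known only in special cases. This is precisely the obstruction that motivates the restricted hypotheses of the present paper: along the Iitaka fibration one has $\kappa(F)=0$, so the good minimal models of the fibers have numerically trivial canonical class and a clean canonical bundle formula (Fujino--Mori) replaces the delicate determinant estimate, while for the Albanese map the assumption of relative dimension $\leq 3$ guarantees good minimal models of the fibers outright by the dimension $\leq 3$ results quoted above. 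I therefore expect the relative good minimal model construction to be comparatively routine given the fiber hypothesis, and the control of the variation term to be the genuine bottleneck.
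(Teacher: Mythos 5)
The statement you have been asked to prove is stated in the paper as a \emph{conjecture} (Ueno's Conjecture C, i.e.\ Iitaka's $C_{n,m}$ and its refinement $C^+_{n,m}$); the paper does not prove it and there is no ``paper's own proof'' to compare against. The paper only records the known special cases --- Kawamata's theorem \cite{Ka1} that $C^+_{n,m}$ holds when the general fiber $F$ has a good minimal model, and Fujino's theorem \cite{OFa} for $F$ of maximal Albanese dimension --- and its sole new contribution in this direction is the Corollary in the introduction, which deduces Conjecture C for $f$ from Theorem~\ref{IF} combined with \cite{Ka1} under the extra hypothesis that the general fiber of the Iitaka fibration of $F$ has a good minimal model.

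Your proposal therefore has a genuine gap, in two places. First, your opening move (``since the general fiber $F$ has a good minimal model, run a relative MMP\dots'') imports a hypothesis that is not part of the statement: the conjecture concerns an arbitrary algebraic fiber space of smooth projective varieties, and the existence of good minimal models for $F$ is itself open in dimension $\geq 4$ --- producing them under additional hypotheses is precisely the subject of this paper. With that hypothesis granted, what you sketch is essentially Kawamata's argument in \cite{Ka1}, so you would be re-deriving a known theorem rather than proving the conjecture. Second, you correctly identify, and then explicitly do not close, the actual obstruction: the $\mathrm{Var}(f)$ term requires bigness of $\det f_*\omega_{X/Y}^{\otimes\ell}$ when the variation is maximal (Viehweg's question $Q(f)$), known only under the good-minimal-model hypothesis on the fibers. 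A further point to be careful about is that the ``standard reduction'' of $C_{n,m}$ to a general-type base via the Iitaka fibration of $Y$ is itself an application of subadditivity to the composed fibration, so as written it is circular without an independent positivity input. Your diagnosis of the bottleneck is accurate, but it means the proposal is a description of the known strategy and its known obstruction, not a proof.
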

\noindent Conjecture C has also been established in many cases. For example,
   \begin{itemize}
       \item $C^+_{n,m}$ holds if the general fiber $F$ of $f$ has a good minimal model by \cite{Ka1}, and
       \item $C_{n,m}$ holds if the general fiber $F$ of $f$ is of maximal Albanese dimension by \cite{OFa}.
   \end{itemize}
(The reader can find a more complete list of the known results in \cite[\S6 and \S7]{M} which we do not repeat here.) A related conjecture, {\bf Viehweg's Question Q(f)} (cf. \cite[\S7]{M}) asks : Let $f:X\rightarrow Y$ be an algebraic fiber space with ${\rm Var}(f)=\dim(Y)$, then is $f_*(\omega^k_{X/Y})$ big for some positive integer $k$? It is known that a positive answer to $Q(f)$ implies $C^+_{n,m}$. Kawamata proved in \cite{Ka1} that $Q(f)$ holds when the general fiber $F$ has a good minimal model. A question of Mori in \cite[Remark 7.7]{M} then asks if $Q(f)$ holds by assuming that the general fiber of the Iitaka fibration of $F$ has a good minimal model. Hence a corollary of the above mentioned results gives a positive answer to Mori's question:
\begin{cor} Let $f:X\rightarrow Y$ be an algebraic fiber space of normal projective varieties with general fiber $F$. Suppose that the general fiber of the Iitaka fibration of $F$ has a good minimal model. Then Ueno's conjecture C holds on $f$.
\end{cor}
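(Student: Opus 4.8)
The plan is to reduce the corollary to Kawamata's theorem \cite{Ka1} by upgrading the hypothesis on the Iitaka fibration of $F$ into the statement that $F$ itself has a good minimal model, which is precisely what Theorem \ref{IF} provides. First I would pass to the smooth category: choose smooth projective birational models fitting into an algebraic fiber space $\tX\to\tY$ birational to $f$, with smooth general fiber $\widetilde F$ birational to $F$. Since the Kodaira dimension $\kappa$, the variation ${\rm Var}(f)$, and the formation of the Iitaka fibration are all birational invariants, conjecture C for $f$ is equivalent to conjecture C for $\tX\to\tY$; moreover the general fiber of the Iitaka fibration of $\widetilde F$ is birational to that of $F$, so it has a good minimal model if and only if the latter does. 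Hence we may assume $X$, $Y$, and $F$ are smooth projective.

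Next I would apply Theorem \ref{IF} to the smooth projective variety $F$ in place of $X$: by hypothesis the general fiber of the Iitaka fibration of $F$ has a good minimal model, so Theorem \ref{IF} yields that $F$ has a good minimal model (and in particular $\kappa(F)\geq 0$). This is the single substantive input; the remainder is formal.

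With $F$ now known to carry a good minimal model, Kawamata's theorem \cite{Ka1} gives $C^+_{n,m}$ directly: $\kappa(X)\geq\kappa(F)+{\rm Max}\{{\rm Var}(f),\kappa(Y)\}$ whenever $\kappa(Y)\geq 0$. It remains to record $C_{n,m}$. If $\kappa(Y)\geq 0$, then $C_{n,m}$ follows from $C^+_{n,m}$ because ${\rm Max}\{{\rm Var}(f),\kappa(Y)\}\geq\kappa(Y)$; if $\kappa(Y)=-\infty$, then $\kappa(F)+\kappa(Y)=-\infty\leq\kappa(X)$, so $C_{n,m}$ holds trivially. Both inequalities therefore hold, which is conjecture C on $f$.

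The genuine difficulty is entirely absorbed into Theorem \ref{IF} and \cite{Ka1}, so the only points requiring care are bookkeeping: verifying that the reduction to smooth models is harmless (birational invariance of $\kappa$ and ${\rm Var}(f)$, and that having a good minimal model is a birational property among smooth projective varieties), and that the two-layered hypothesis — concerning the general fiber of the Iitaka fibration of the \emph{general fiber} $F$ — is correctly fed into Theorem \ref{IF} with $X$ replaced by $F$.
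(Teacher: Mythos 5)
Your proposal is correct and is exactly the argument the paper intends: the corollary is stated as an immediate consequence of Theorem \ref{IF} applied to $F$ (upgrading the hypothesis to ``$F$ has a good minimal model'') followed by Kawamata's theorem in \cite{Ka1}, which is precisely your reduction. The only cosmetic quibble is that $\kappa(F)\geq 0$ is a precondition for the hypothesis on the Iitaka fibration of $F$ to make sense rather than an output of Theorem \ref{IF}, but this does not affect the argument.
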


\smallskip

This paper is organized as follow: We recall some definitions in section 1. In section 2 we establish the necessary ingredients for constructing good minimal models. In section 3 we prove a nonvanishing theorem by using generic vanishing results. Section 4 is the heart of this paper where we construct our good minimal models in Theorems \ref{K=0}, \ref{IF}, and \ref{Main}.

\begin{remark} After the completion of this paper, we were informed that Professor Yum-Tong Siu has announced a proof of the abundance conjecture in \cite{S} which in particular would imply many of the results in this paper.
\end{remark}

\bigskip

{\small\noindent {\bf Acknowledgment.} I am grateful to my advisor, Professor Christopher Hacon, for many useful suggestions, discussions, and his generosity.

\section{Preliminaries}

We work over the complex number field $\CC$. We refer the readers to \cite{KM} and \cite{BCHM} for the standard terminology on singularities and the minimal model program, and to \cite{L2} and \cite{BCHM} for definition of a multiplier ideal sheaf and the related asymptotic constructions.

\smallskip

In this paper, \emph{a pair $(X,\Delta)$ over $U$} consists of a $\QQ$-factorial normal projective variety $X$ with an effective $\RR$-Weil divisor $\Delta$ such that $K_X+\Delta$ is $\RR$-Cartier and a projective morphism $X\rightarrow U$ to a quasi-projective variety $U$. We recall the definition of a minimal model here.
\begin{definition}\label{M} For a log canonical pair $(X,\Delta)$ over $U$, we say that a birational map $\phi:(X,\Delta)\dashrightarrow (X',\Delta'=\phi_*\Delta)$ over $U$ is a \emph{minimal model} if
  \begin{enumerate}
     \item $X'$ is normal and $\QQ$-factorial,
     \item $\phi$ extracts no divisors,
     \item $K_{X'}+\Delta'$ is nef over $U$, and
     \item $\phi$ is $(K_X+\Delta)$-negative, i.e. $a(F,X,\Delta)<a(F,X',\Delta')$ for each $\phi$-exceptional divisor $F$.
  \end{enumerate}
Moreover, we say that \emph{abundance} holds on $(X',\Delta')$ if $K_{X'}+\Delta'$ is semiample over $U$, i.e. $K_{X'}+\Delta'$ is an $\RR$-linear sum of $\QQ$-Cartier semiample over $U$ divisors. A \emph{good minimal model} of a pair $(X,\Delta)$ over $U$ is a minimal model such that abundance holds.
\end{definition}
\begin{remark} A minimal model in this paper is a log terminal model as defined in \cite{BCHM}.
\end{remark}
\begin{remark} Let $X\rightarrow U$ and $Y\rightarrow U$ be two projective morphisms of normal quasi-projective varieties. Let $\phi:X\dashrightarrow Y$ be a birational contraction over $U$. Let $D$ and $D'$ be $\RR$-Cartier divisors such that $D'=\phi_*D$ is nef over $U$. Then $\phi$ is $D$-negative if given a common resolution $p:W\rightarrow X$ and $q:W\rightarrow Y$, we may write
\begin{align*} p^*D=q^*D'+E,
\end{align*}
where $p_*E\geq 0$ and the support of $p_*E$ contains the union of all $\phi$-exceptional divisors (cf. \cite[Lemma 3.6.3]{BCHM}).
\end{remark}
\smallskip
A proper morphism $f:X\rightarrow Y$ of normal varieties is \emph{an algebraic fiber space} if it is surjective with connected fibers. For an effective divisor $\Gamma$ on $X$, we write $\Gamma=\Gamma_{\rm hor}+\Gamma_{\rm ver}$ where $\Gamma_{\rm hor}$ and $\Gamma_{\rm ver}$ are effective without common components such that $\Gamma_{\rm hor}$ dominates $Y$ and codim(Supp$(f(\Gamma_{\rm ver})))\geq 1$ on $Y$ respectively.
\smallskip

For general results on Fourier-Mukai transforms, we refer to \cite{Mu}. We recall the definition of certain cohomological support loci which will be used in the proof of Theorem \ref{NONV}.
\begin{definition} Let $\mathcal{F}$ be a coherent sheaf on an abelian variety $A$. Then we define for each $i=0,...,\dim(A)$ the subset
  \begin{align*} V^i(\mathcal{F}):=\{P\in\hat{A}|\ h^i(\mathcal{F}\otimes P)>0\}.
  \end{align*}
\end{definition}
\noindent These subsets are studied in \cite{GL1}, \cite{GL2}, and \cite{Hac}.

\begin{definition} Let $L$ be a line bundle on a smooth projective variety $X$. For each non-negative integer $m$, we define
  \begin{align*} V_m(L):=\{P\in {\rm Pic}^0(X)|\ h^0(X,L^{\otimes m}\otimes P)>0\}.
  \end{align*}
\end{definition}
\noindent These subsets are studied in \cite{CH}.

\section{Preparation}

\subsection{Good Minimal Models}

\begin{lem}\label{MM} Let $(X_i,\Delta_i)$, $i=1,2$, be two klt pairs over $U$ and $\alpha:(X_1,\Delta_1)\dashrightarrow (X_2,\Delta_2)$ be a birational map over $U$ with $\alpha_*\Delta_1=\Delta_2$. Suppose that $\alpha$ is $(K_{X_1}+\Delta_1)$-negative and extracts no divisors, then $(X_1,\Delta_1)$ has a good minimal model over $U$ if $(X_2,\Delta_2)$ does.
\end{lem}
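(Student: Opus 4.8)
The plan is to take any good minimal model $\phi_2\colon (X_2,\Delta_2)\dashrightarrow (X',\Delta')$ of $(X_2,\Delta_2)$ over $U$ and to prove that the composite $\psi:=\phi_2\circ\alpha\colon (X_1,\Delta_1)\dashrightarrow(X',\Delta')$ is a good minimal model of $(X_1,\Delta_1)$ over $U$. Since $\alpha_*\Delta_1=\Delta_2$ and $\phi_{2*}\Delta_2=\Delta'$, we have $\psi_*\Delta_1=\Delta'$, and the requirements that $X'$ be normal and $\QQ$-factorial, that $K_{X'}+\Delta'$ be nef over $U$, and that abundance hold (i.e.\ $K_{X'}+\Delta'$ semiample over $U$) are inherited verbatim from $(X',\Delta')$ being a good minimal model of $(X_2,\Delta_2)$. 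Thus, by Definition \ref{M}, only two things need checking: that $\psi$ extracts no divisors and that $\psi$ is $(K_{X_1}+\Delta_1)$-negative. The first is formal: a composition of birational contractions is a birational contraction, because a prime divisor on $X'$ is not contracted by $\phi_2^{-1}$, so its strict transform on $X_2$ is a divisor, which in turn is not contracted by $\alpha^{-1}$, so its strict transform on $X_1$ is a divisor; hence $\psi^{-1}$ contracts no divisor.

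The heart of the matter is negativity, i.e.\ showing $a(F,X_1,\Delta_1)<a(F,X',\Delta')$ for every $\psi$-exceptional prime divisor $F$ over $X_1$. The key auxiliary fact I would first record is that, because $(X',\Delta')$ is a \emph{minimal model} of $(X_2,\Delta_2)$, the defining inequality for $\phi_2$ can be upgraded from its exceptional divisors to all divisorial valuations: $a(F,X_2,\Delta_2)\leq a(F,X',\Delta')$ for every $F$, with strict inequality whenever $F$ is $\phi_2$-exceptional. Concretely, on a common resolution $r\colon W\to X_2$, $q\colon W\to X'$ one writes $r^*(K_{X_2}+\Delta_2)=q^*(K_{X'}+\Delta')+E_{\phi_2}$; here $-E_{\phi_2}$ is $r$-nef since $q^*(K_{X'}+\Delta')$ is nef over $U$ and hence over $X_2$, while $r_*E_{\phi_2}\geq 0$ by the negativity of $\phi_2$, so the negativity lemma (\cite[Lemma 3.39]{KM}, \cite[Lemma 3.6.2]{BCHM}) forces $E_{\phi_2}\geq 0$, which is exactly the asserted comparison.

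Given this, I would split the $\psi$-exceptional divisors $F$ according to their behaviour under $\alpha$. If $F$ is $\alpha$-exceptional, then negativity of $\alpha$ gives $a(F,X_1,\Delta_1)<a(F,X_2,\Delta_2)$, and the comparison above gives $a(F,X_2,\Delta_2)\leq a(F,X',\Delta')$, whence $a(F,X_1,\Delta_1)<a(F,X',\Delta')$. If instead $F$ is not $\alpha$-exceptional, its centre $D_2:=\alpha_*F$ is a prime divisor on $X_2$; since $\psi$ contracts $F$, the map $\phi_2$ contracts $D_2$, so $D_2$ is $\phi_2$-exceptional and the comparison gives $a(F,X_2,\Delta_2)<a(F,X',\Delta')$, while $\alpha_*\Delta_1=\Delta_2$ preserves the coefficient of the non-$\alpha$-exceptional divisor $F$, so $a(F,X_1,\Delta_1)=a(F,X_2,\Delta_2)$. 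In both cases $a(F,X_1,\Delta_1)<a(F,X',\Delta')$, which is precisely the required negativity; combined with the preceding paragraph, $\psi$ is then a good minimal model of $(X_1,\Delta_1)$.

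The step I expect to be the main obstacle, or at least the one demanding the most care, is the upgrade in the second paragraph from the defining exceptional-divisor-only negativity of $\phi_2$ to the discrepancy comparison over \emph{all} divisorial valuations, as this is exactly where the nefness of $K_{X'}+\Delta'$ and the negativity lemma enter; the rest is bookkeeping with discrepancies and strict transforms. Throughout I would note that the qualifier ``over $U$'' is harmless, since any curve contracted by $r$ is contracted over $U$, so nefness over $U$ yields the relative nefness needed to apply the negativity lemma.
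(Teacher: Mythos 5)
Your proof is correct and is essentially the argument behind the result the paper invokes: the paper's proof of Lemma \ref{MM} is just the citation \cite[Lemma 3.6.9]{BCHM}, and your composition-of-contractions argument --- using the negativity lemma to upgrade the $\phi_2$-exceptional-only negativity to the comparison $a(F,X_2,\Delta_2)\leq a(F,X',\Delta')$ for all divisorial valuations, then splitting the $\psi$-exceptional divisors according to whether $\alpha$ contracts them --- is exactly how that lemma is proved. No gaps; nothing further is needed.
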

\begin{proof} This is \cite[Lemma 3.6.9]{BCHM}.
\end{proof}

\begin{lem}\label{TM} Let $(X,\Delta)$ be a terminal pair over $U$. Then for any resolution $\mu:(X',\Delta')\rightarrow (X,\Delta)$ with $\Delta':=\mu^{-1}_*\Delta$, a good minimal model of $(X',\Delta')$ is also a good minimal model of $(X,\Delta)$.
\end{lem}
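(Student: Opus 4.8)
The plan is to show that the given good minimal model of $(X',\Delta')$, equipped with the induced birational map, is already a good minimal model of $(X,\Delta)$. Write $\phi\colon (X',\Delta')\dashrightarrow (W,\Delta_W)$ for the good minimal model and set $\psi:=\phi\circ\mu^{-1}\colon X\dashrightarrow W$, so that $\Delta_W=\phi_*\Delta'=\psi_*\Delta$. The three conditions bearing on $W$ itself — that $W$ is normal and $\QQ$-factorial, and that $K_W+\Delta_W$ is nef and semiample over $U$ — are inherited directly from $W$ being a good minimal model of $(X',\Delta')$. So the entire content is to verify conditions (2) and (4) of Definition \ref{M} for $\psi$, namely that $\psi$ extracts no divisors and is $(K_X+\Delta)$-negative. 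The structural input will be terminality of $(X,\Delta)$: since $\mu$ is a resolution and $\Delta'=\mu^{-1}_*\Delta$, we may write $K_{X'}+\Delta'=\mu^*(K_X+\Delta)+E$, where $E\geq 0$ is $\mu$-exceptional and, because all discrepancies of a terminal pair are strictly positive, $\supp E$ is exactly the union of all $\mu$-exceptional divisors.

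Next I would pass to a common resolution. Choose $r\colon V\to X'$ and $s\colon V\to W$ resolving $\phi$; then $\mu\circ r\colon V\to X$ and $s\colon V\to W$ resolve $\psi$. Minimal-model negativity for $\phi$ gives $r^*(K_{X'}+\Delta')=s^*(K_W+\Delta_W)+G$ with $G\geq 0$ and $\supp(r_*G)$ containing every $\phi$-exceptional divisor. Pulling back the terminal relation by $r$ and subtracting yields
\[
(\mu r)^*(K_X+\Delta)=s^*(K_W+\Delta_W)+F,\qquad F=G-r^*E.
\]
Because $E$ is $\mu$-exceptional, $\mu_*E=0$, and $r_*r^*E=E$, so $(\mu r)_*F=\mu_*r_*G\geq 0$. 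This already secures the positivity half of $(K_X+\Delta)$-negativity; the support condition will follow once $\psi$ is known to be a contraction, since a $\psi$-exceptional prime divisor on $X$ is precisely the $\mu$-image of a $\phi$-exceptional divisor, which lies in $\supp(r_*G)$ and hence survives in $\supp(\mu_*r_*G)$.

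The crux is to prove that $F$ is effective, equivalently that no $\mu$-exceptional divisor survives on $W$ — which is exactly condition (2). I would deduce this from the negativity lemma (cf. \cite{KM}). The key observation is that $s^*(K_W+\Delta_W)$ is nef over $X$: it is nef over $U$ as the pullback of a divisor nef over $U$, and since $\mu r\colon V\to X$ factors $V\to X\to U$, every curve contracted over $X$ is contracted over $U$, so nefness over $U$ upgrades to nefness over $X$. As $(\mu r)^*(K_X+\Delta)$ is trivial over $X$, the divisor $-F=s^*(K_W+\Delta_W)-(\mu r)^*(K_X+\Delta)$ is $(\mu r)$-nef; combined with $(\mu r)_*F\geq 0$, the negativity lemma forces $F\geq 0$. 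Now suppose some $\mu$-exceptional $E_i$ were not contracted by $\phi$; its strict transform $\bar E_i$ would be a prime divisor on $W$ not contained in $\Delta_W$, so $a(\bar E_i,W,\Delta_W)=0$, whereas $a(\bar E_i,X,\Delta)>0$ by terminality. Reading off coefficients in the displayed relation, the coefficient of $\bar E_i$ in $F$ equals $a(\bar E_i,W,\Delta_W)-a(\bar E_i,X,\Delta)<0$, contradicting $F\geq 0$. Hence every $\mu$-exceptional divisor is $\phi$-exceptional, $\psi$ extracts no divisors, and together with the support statement above $\psi$ is $(K_X+\Delta)$-negative, completing the verification.

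I expect the main obstacle to be the correct use of the negativity lemma across two distinct birational maps: one must be careful that the relevant negativity is taken relative to $\mu r\colon V\to X$ and that nefness of $K_W+\Delta_W$ over $U$ genuinely transfers to nefness over $X$. The accompanying discrepancy bookkeeping — reconciling $a(\bar E_i,X,\Delta)$, $a(\bar E_i,W,\Delta_W)$, and the coefficients of $E$, $G$, and $F$ — is routine once the comparison relation on $V$ is in place, but it is precisely where terminality of $(X,\Delta)$ enters and must be invoked with care.
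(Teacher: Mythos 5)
Your argument is correct and is essentially the paper's: the paper records exactly your key observation that terminality gives $K_{X'}+\Delta'=\mu^*(K_X+\Delta)+E$ with $E\geq 0$ supported on precisely the $\mu$-exceptional divisors, and then delegates the remaining negativity-lemma and discrepancy bookkeeping to \cite[Lemma 3.6.10]{BCHM}, which is the argument you have written out in full. Your verification that every $\mu$-exceptional divisor is $\phi$-contracted and that $\psi$ is $(K_X+\Delta)$-negative matches that cited proof step for step.
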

\begin{proof} Note that if we write $K_{X'}+\Delta'=\mu^*(K_X+\Delta)+E$, then $E$ is effective and its support equals to the set of all $\mu$-exceptional divisors. Hence the same argument as in \cite[Lemma 3.6.10]{BCHM} applies (without adding extra $\mu$-exceptional divisors).
\end{proof}

\begin{thm}\label{FLOP} Let $\phi_i:(X,\Delta)\dashrightarrow (X_i,\Delta_i)$, i=1,2, be two minimal models of a klt pair $(X,\Delta)$ over $U$ with  $\Delta_i=(\phi_i)_{*}\Delta$. Then the natural birational map $\psi:(X_1,\Delta_1)\dashrightarrow (X_2,\Delta_2)$ over $U$ can be decomposed into a sequence of $(K_{X_1}+\Delta_1)$-flops over $U$.
\end{thm}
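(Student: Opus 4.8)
The plan is to show that $\psi$ is crepant and an isomorphism in codimension one, and then to realize it as the sequence of flops coming from a perturbed relative minimal model program.

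First I would check that $\psi$ is an isomorphism in codimension one: since each $\phi_i$ extracts no divisors, every divisor on $X_i$ is the strict transform of a divisor on $X$, and the $(K_X+\Delta)$-negativity of the $\phi_i$ together with the negativity lemma forces $\psi$ to contract no divisor (this is the standard fact that two minimal models agree in codimension one). Granting this, I would prove crepancy. On a common resolution $p:W\to X_1$, $q:W\to X_2$ write $p^*(K_{X_1}+\Delta_1)=q^*(K_{X_2}+\Delta_2)+E$; pushing forward and using that $\psi$ is an isomorphism in codimension one gives $p_*E=q_*E=0$. Since $q^*(K_{X_2}+\Delta_2)$ is nef and $p^*(K_{X_1}+\Delta_1)$ is numerically trivial over $X_1$, the divisor $-E$ is nef over $X_1$, so the negativity lemma (cf. \cite{KM}) together with $p_*E=0$ gives $E\geq 0$; the symmetric argument over $X_2$ gives $-E\geq 0$. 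Hence $E=0$, so $p^*(K_{X_1}+\Delta_1)=q^*(K_{X_2}+\Delta_2)$ and the two pairs have the same discrepancies.

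Next I would set up the program. Fix an ample $\QQ$-divisor $H_2$ on $X_2$ over $U$ with $(X_2,\Delta_2+H_2)$ klt, and let $H_1:=(\psi^{-1})_*H_2$; as $\psi$ is an isomorphism in codimension one, $H_1$ is a well-defined movable $\QQ$-divisor on $X_1$ and $(X_1,\Delta_1+\epsilon H_1)$ is klt for $0<\epsilon\ll 1$. Because $K_{X_2}+\Delta_2+\epsilon H_2$ is ample (nef plus ample), the pair $(X_2,\Delta_2+\epsilon H_2)$ is its own ample model, and by \cite{BCHM} a $(K_{X_1}+\Delta_1+\epsilon H_1)$-minimal model program over $U$ exists and terminates. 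Its output is $\QQ$-factorial and admits a small birational morphism to $X_2$; since $X_2$ is $\QQ$-factorial, this morphism is an isomorphism, so the program runs from $X_1$ to $X_2$. As $\psi$ is an isomorphism in codimension one, no step can be a divisorial contraction, so every step is a flip for $K_{X_1}+\Delta_1+\epsilon H_1$.

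The heart of the matter, and the step I expect to be the main obstacle, is to show that each of these flips is actually a $(K_{X_1}+\Delta_1)$-flop. Inductively $K_{X_1}+\Delta_1$ stays crepant to $K_{X_2}+\Delta_2$, so each contracted extremal ray $R$ satisfies $(K_{X_1}+\Delta_1)\cdot R\geq 0$, and the step is a flop exactly when this intersection vanishes; the difficulty is to exclude the possibility $(K_{X_1}+\Delta_1)\cdot R>0$, which would require $H_1\cdot R<0$ and $\epsilon>(K_{X_1}+\Delta_1)\cdot R/|H_1\cdot R|$. I would rule this out using finiteness of models (\cite{BCHM}) for the family $\{(X_1,\Delta_1+tH_1):t\in[0,1]\}$: only finitely many birational models, and hence only finitely many extremal rays with $(K_{X_1}+\Delta_1)\cdot R>0$ and $H_1\cdot R<0$, can occur, so the thresholds $(K_{X_1}+\Delta_1)\cdot R/|H_1\cdot R|$ admit a uniform positive lower bound. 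Choosing $\epsilon$ below this bound forces $(K_{X_1}+\Delta_1)\cdot R=0$ at every step, so each step is a small $(K_{X_1}+\Delta_1)$-trivial contraction followed by its flip, i.e. a $(K_{X_1}+\Delta_1)$-flop over $U$, and their composition is $\psi$.
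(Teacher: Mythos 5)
Your overall route is the same one the paper takes, with the citations unwound: the paper's entire proof consists of quoting \cite[Theorem 3.52]{KM} for the fact that two minimal models are crepant and isomorphic in codimension one, and then invoking the argument of \cite{Ka2} for the decomposition into flops. Your first two paragraphs are exactly the proof of \cite[Theorem 3.52]{KM} (the negativity lemma applied in both directions on a common resolution), and your perturbed minimal model program is exactly Kawamata's construction, so in substance you and the paper agree.

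The one step of your write-up that is not yet airtight is the final uniform-threshold argument. You invoke ``finiteness of models for the family $\{(X_1,\Delta_1+tH_1):t\in[0,1]\}$,'' but the finiteness statements of \cite{BCHM} apply to boundaries containing a fixed big divisor, and at $t=0$ the boundary $\Delta_1$ need not be big; so finiteness over the closed interval, which is what a \emph{uniform} lower bound on the thresholds $(K_{X_1}+\Delta_1)\cdot R/|H_1\cdot R|$ over all models and all $\epsilon\in(0,\epsilon_0]$ requires, is not a direct quotation. Moreover, the intermediate steps of a $(K_{X_1}+\Delta_1+\epsilon H_1)$-MMP are minimal models of members of your family only if the program is run with scaling (of $H_1$ or of an auxiliary ample divisor), which you should say explicitly; otherwise nothing guarantees that the rays contracted mid-program live on one of your finitely many models. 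Both points are repairable: on any fixed model the set of $(K+\Delta+\epsilon_0H)$-negative extremal rays is finite because $\Delta+\epsilon_0H$ is big (\cite[Corollary 3.8.2]{BCHM}), so one can choose $\epsilon$ adaptively at each step so that every contracted ray is $(K+\Delta)$-trivial, and then handle termination separately. This is precisely the delicate point that the paper itself confronts in Proposition \ref{TER} and Corollary \ref{AB}, where it works on intervals $[t_1,t_0]$ with $t_1>0$ and extracts $(K+\Delta)$-trivial flops by an uncountability argument rather than by a single uniform threshold; for Theorem \ref{FLOP} the paper simply defers the whole issue to \cite{Ka2}.
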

\begin{proof} By \cite[Theorem 3.52]{KM}, $(X_i,\Delta_i)$ are isomorphic in codimension one, and hence the argument in \cite{Ka2} applies.
\end{proof}

\begin{prop}\label{GMM} Let $(X,\Delta)$ be a klt pair over $U$. If $(X,\Delta)$ has a good minimal model over $U$, then any other minimal model of $(X,\Delta)$ over $U$ is also good.
\end{prop}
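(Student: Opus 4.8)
The plan is to reduce the statement to the case of a single flop via Theorem \ref{FLOP}, and then to transfer semiampleness across a flop by passing to a common resolution. Concretely, since $(X,\Delta)$ has a good minimal model, fix one, say $\phi_1:(X,\Delta)\dashrightarrow(X_1,\Delta_1)$ with $K_{X_1}+\Delta_1$ semiample over $U$. Let $\phi_2:(X,\Delta)\dashrightarrow(X_2,\Delta_2)$ be any other minimal model; I must show $K_{X_2}+\Delta_2$ is semiample over $U$. By Theorem \ref{FLOP} the induced birational map $\psi:(X_1,\Delta_1)\dashrightarrow(X_2,\Delta_2)$ over $U$ decomposes into a finite sequence of $(K_{X_1}+\Delta_1)$-flops over $U$, so by induction on the length of this sequence it suffices to treat the case in which $\psi$ itself is a single flop.

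A flop is \emph{crepant}, so on a common log resolution $p:W\to X_1$ and $q:W\to X_2$ one has $p^*(K_{X_1}+\Delta_1)=q^*(K_{X_2}+\Delta_2)$. Since the pullback of a divisor that is semiample over $U$ under a morphism remains semiample over $U$, the left-hand side is semiample over $U$, and hence so is $D_W:=q^*(K_{X_2}+\Delta_2)$. The remaining, and genuinely substantive, step is to \emph{descend} semiampleness of $D_W$ along the birational contraction $q$ to the divisor $D':=K_{X_2}+\Delta_2$ on $X_2$, which is already nef over $U$ by hypothesis.

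To carry out this descent, write $D_W=h^*H$ for a contraction $h:W\to V$ over $U$ with $H$ ample over $U$. For every $q$-exceptional curve $C$ one has $H\cdot h_*C = D_W\cdot C = D'\cdot q_*C = 0$, because $q_*C=0$; ampleness of $H$ then forces $h_*C=0$, so $h$ contracts every curve that $q$ contracts. Using that $q$ has connected fibers and $q_*\ko_W=\ko_{X_2}$, the morphism $h$ factors as $h=g\circ q$ for a morphism $g:X_2\to V$ over $U$, whence $D'=q_*D_W=q_*q^*g^*H=g^*H$ is semiample over $U$, as desired. The point at which I expect the real work to lie is precisely this descent: one must handle the $\RR$-Cartier coefficients carefully (decomposing $D_W$ into its $\QQ$-Cartier semiample summands and checking that the resulting contractions are compatible) and justify the factorization $h=g\circ q$ rigorously; the crepancy input and the ``pullback preserves semiampleness'' direction are by comparison routine.
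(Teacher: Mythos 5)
Your argument is correct and shares the paper's top-level strategy (decompose the map between the two minimal models into $(K_{X_1}+\Delta_1)$-flops via Theorem \ref{FLOP} and transfer semiampleness across one flop at a time), but the single-flop step is executed differently. The paper never passes to a common resolution: since the flopped curves are $(K_{X_i}+\Delta_i)$-trivial, the ample model $X_i\to Z:=\mathbf{Proj}_U(K_{X_i}+\Delta_i)$ factors through the flopping contraction $g_i:X_i\to Z_i$ via some $\psi:Z_i\to Z$, and then crepancy of the flop gives $K_{X_{i+1}}+\Delta_{i+1}=g_{i+1}^*\psi^*H$ directly, with no descent lemma needed because the morphism $g_{i+1}:X_{i+1}\to Z_i$ is already part of the flop data. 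Your route instead proves a more general transfer statement --- semiampleness passes between any two crepant minimal models that are isomorphic in codimension one --- by descending $D_W=h^*H$ along $q$ using the rigidity lemma. Two remarks. First, your version actually renders Theorem \ref{FLOP} superfluous: by \cite[Theorem 3.52]{KM} the two minimal models are isomorphic in codimension one, and the negativity lemma applied on a common resolution already yields $p^*(K_{X_1}+\Delta_1)=q^*(K_{X_2}+\Delta_2)$, so you could run the descent in one step rather than inducting over a flop sequence. Second, on the $\RR$-coefficient point you flag: the summand-by-summand route you sketch in parentheses would hit a snag, because an individual semiample summand $p^*S_i$ need not be numerically trivial on $q$-exceptional curves --- only the sum is. The clean fix is to first assemble a \emph{single} contraction $h:W\to V$ (Stein factorization of the morphism to the fiber product over $U$ of the targets of the individual semiample fibrations) with $D_W=h^*H$ and $H$ ample over $U$, and only then run your curve-contraction argument on $H$; with that ordering your proof is complete.
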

\begin{proof} Suppose $(X_g,\Delta_g)$ is a good minimal model of $(X,\Delta)$ over $U$ and $(\tilde{X},\tilde{\Delta})$ is another minimal model of $(X,\Delta)$ over U. From Theorem \ref{FLOP}, the birational map $\alpha:(X_g,\Delta_g)\dashrightarrow (\tilde{X},\tilde{\Delta})$ over $U$ may be decomposed into a sequence of flops over $U$. If an intermediate step is $X_i\dashrightarrow X_{i+1}$ with $X_i$ a good minimal model of $(X,\Delta)$ over $U$, then the morphism $X_i\rightarrow Z:={\bf Proj}_U(K_{X_i}+\Delta_i)$ factors through the contraction morphism $g_i:X_i\rightarrow Z_i$ by $\psi:Z_i\rightarrow Z$. Hence for the corresponding flop $g_{i+1}:X_{i+1}\rightarrow Z_i$, there exists a divisor $H$ on $Z$ ample over $U$ such that $K_{X_{i+1}}+\Delta_{i+1}=g_{i+1}^{*}\psi^{*}(H)$. In particular, $K_{X_{i+1}}+\Delta_{i+1}$ is semiample over $U$ and $X_{i+1}$ is also a good minimal model of $(X,\Delta)$ over $U$.
\end{proof}

\begin{prop}\label{TER} If a klt pair $(X,\Delta)$ over $U$ has a good minimal model over $U$, then any $(K_X+\Delta)$ minimal model program with scaling of an ample divisor $A$ over $U$ terminates.
\end{prop}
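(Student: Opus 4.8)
The plan is to run the program while measuring its progress against the fixed good minimal model furnished by the hypothesis. Write the program as $(X,\Delta)=(X_0,\Delta_0)\dashrightarrow(X_1,\Delta_1)\dashrightarrow\cdots$, let $A_i$ be the strict transform of $A$ on $X_i$, and put $\lambda_i=\inf\{t\ge 0 : K_{X_i}+\Delta_i+tA_i\ \text{is nef over}\ U\}$. By construction $\lambda_0\ge\lambda_1\ge\cdots$ is non-increasing with a limit $\lambda\ge0$, and the $i$-th step is a flip or divisorial contraction of a ray $R_i$ with $(K_{X_i}+\Delta_i)\cdot R_i<0$ and $(K_{X_i}+\Delta_i+\lambda_iA_i)\cdot R_i=0$. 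The goal is to show that the program has only finitely many steps; when it does, its output is a minimal model, automatically good by Proposition \ref{GMM}. I would first record that if $(Y,\Gamma)$ is a fixed good minimal model of $(X,\Delta)$ over $U$, then for each $i$ the induced map $X_i\dashrightarrow Y$ is, as one verifies, $(K_{X_i}+\Delta_i)$-negative and extracts no divisor, so by Lemma \ref{MM} every $(X_i,\Delta_i)$ over $U$ again admits a good minimal model; this keeps the hypothesis alive all along the program.

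Next I would bound the steps carried out at a positive scaling level. For any $\varepsilon>0$ and every $t\in[\varepsilon,\lambda_0]$ the boundary $\Delta+tA$ is big, so by \cite{BCHM} each pair $(X,\Delta+tA)$ has a good minimal model, and, as $\Delta+tA$ traces a segment inside the big cone, the finiteness of minimal models \cite[Corollary 1.1.5]{BCHM} shows that only finitely many models occur over this range. Since consecutive models in the program are distinct and each $X_i$ with $\lambda_i\ge\varepsilon$ is a minimal model of $(X,\Delta+\lambda_iA)$, only finitely many steps can satisfy $\lambda_i\ge\varepsilon$. In particular, if $\lambda>0$ we take $\varepsilon=\lambda$ and are done at once, because every $\lambda_i\ge\lambda$; so the difficulty is confined to the case $\lambda=0$.

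The remaining case $\lambda=0$ with infinitely many steps is the genuine obstacle. Here the $\lambda_i$ decrease strictly to $0$, all steps are honest $(K_{X_i}+\Delta_i)$-flips, and the segment bounds of the previous paragraph control only an initial stretch $\lambda_i\ge\varepsilon$ for each fixed $\varepsilon>0$, never the whole tail. This is exactly where the existence of a good minimal model of $(X,\Delta)$ itself, and not merely of the big perturbations $(X,\Delta+tA)$, has to enter. I would follow Birkar's strategy for termination with scaling in the presence of a minimal model: use $(Y,\Gamma)$ to equip $K_X+\Delta$ over $U$ with a weak Zariski decomposition, and, using that each $(X_i,\Delta_i)$ still carries a good minimal model by Lemma \ref{MM}, attach to this decomposition a difficulty-type invariant that is non-increasing under the flips and strictly drops whenever a flipping locus meets the negative part. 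Special termination then forbids the flips from eventually all being disjoint from the negative part, while the monotone invariant forbids the complementary possibility, so the tail cannot be infinite.

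The crux, and the step I expect to require the most care, is precisely this exclusion of an infinite flipping tail at scaling level zero. It is the one place where boundedness arguments alone fail, so that the global minimal-model hypothesis must be converted into a genuinely terminating invariant; by contrast, the positive-level case is dispatched cleanly by bigness and the finiteness of models, and the propagation of the hypothesis down the program is routine from Lemma \ref{MM}.
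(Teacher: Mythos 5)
Your reduction to the case where the nef thresholds $\lambda_i$ decrease to $\lambda=0$ is sound and matches the first half of the paper's argument: bigness of $\Delta+tA$ plus finiteness of models does bound the number of steps with $\lambda_i\ge\varepsilon$, and propagating the good-minimal-model hypothesis along the program via Lemma \ref{MM} is fine. But your proof stops exactly where the real work begins. For the case $\lambda=0$ you offer only a plan --- equip $K_X+\Delta$ with a weak Zariski decomposition coming from $(Y,\Gamma)$, attach an unspecified ``difficulty-type invariant'' that drops under flips, and invoke special termination --- without constructing the invariant, proving its monotonicity, or justifying that special termination applies. This is a genuine gap, not a routine verification: special termination in the usual sense concerns flips along the non-klt locus or reduced boundary components and presupposes termination in lower dimensions, neither of which is available for a klt pair with arbitrary $\Delta$; and the bare existence of a good minimal model does not by itself hand you a terminating numerical invariant.

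The paper closes this case by a different mechanism, which \emph{eliminates} the infinite tail rather than terminating it, and you would need either this argument or a complete construction of your invariant. First one produces, from the given good minimal model $(X_g,\Delta_g)$, a single model $X'$ that is a minimal model of $(X,\Delta+tA)$ over $U$ simultaneously for \emph{all} $t\in[0,t_0]$: this is done by running an auxiliary MMP over the ample model $Z={\bf Proj}_U(K_{X_g}+\Delta_g)$, where every contracted curve is $(K_{X_g}+\Delta_g)$-trivial and $A_g$-negative, and then shrinking $t_0$ using finiteness of extremal rays and openness of negativity. Next, for $0<t_1<t_0$ the given MMP with scaling reaches after finitely many steps a model $\tilde X$ on which $K_{\tilde X}+\tilde\Delta+t_1\tilde A$ is nef; $\tilde X$ and $X'$ are isomorphic in codimension one and, for every $t\in[t_1,t_0]$, are connected by a sequence of $(K_{X'}+\Delta'+tA')$-flops by Theorem \ref{FLOP}. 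Finiteness of models makes the set of possible flops finite, so an uncountable pigeonhole in $t$ produces a sequence whose steps are $(K_{X'}+\Delta'+tA')$-trivial for more than one value of $t$, hence simultaneously $(K_{X'}+\Delta')$-trivial and $A'$-trivial. Therefore $\tilde X$ is already a minimal model of $(X,\Delta)$ over $U$: the scaling threshold hits $0$ at a finite stage, and the scenario $\lambda=0$ with infinitely many flips never occurs.
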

\begin{proof} Let $\phi:(X,\Delta)\dashrightarrow (X_g,\Delta_g)$ with $\Delta_g=\phi_*\Delta$ be a good minimal model of $(X,\Delta)$ over $U$ and $f:X_g\rightarrow Z={\bf Proj}_U(K_{X_g}+\Delta_g)$ the corresponding morphism over $U$. Note that $\phi$ contracts exactly the divisorial part of ${\bf B}(K_X+\Delta/U)$ (cf. \cite[Lemma 3.6.3]{BCHM}).

\smallskip

Pick $t_0>0$ such that $(X_g,\Delta_g+t_0A_g)$ with $A_g=\phi_*A$ is klt and an ample divisor $H$ on $X_g$. By \cite{BCHM}, the outcome of running a $(K_{X_g}+\Delta_g+t_0A_g)$-minimal model program with scaling of $H$ over $Z$ exists and is a minimal model $\psi:X_g\dashrightarrow X'$ of $(X_g,\Delta_g+t_0A_g)$ over $Z$. As $K_{X_g}+\Delta_g\equiv_Z0$, we have $K_{X'}+\Delta'\equiv_Z0$ where $\Delta'=\psi_*\Delta_g$. Hence those curves contracted in each step of this minimal model program over $Z$ have trivial intersection with $K_{X_g}+\Delta_g$ and negative intersection with $A_g$. In particular, this shows that $X'$ is a minimal model of $(X_g,\Delta_g+tA_g)$ over $Z$ for all $t\in(0,t_0]$. Since $\Delta'+t_0A'$ with $A'=\psi_*A_g$ is big over $U$, there exists only finitely many $(K_{X'}+\Delta'+t_0A')$-negative extremal rays in $\overline{\rm NE}(X'/U)$ by \cite[Corollary 3.8.2]{BCHM}. Hence by considering smaller $t_0>0$, we can assume that $X'$ is a minimal model of $(X_g,\Delta_g+tA_g)$ over $U$ for all $t\in(0,t_0]$. As a map being negative is an open condition, we may choose $t_0>0$ sufficiently small such that $\psi\circ\phi$ is $(K_X+\Delta+tA)$-negative for all $t\in(0,t_0]$, and hence $X'$ is a minimal model of $(X,\Delta+tA)$ over $U$ for all $t\in(0,t_0]$. This implies that $\psi\circ\phi$ contracts exactly the divisorial part of $\mathbf{B}(K_X+\Delta+t_0A/U)$ which is contained in $\mathbf{B}(K_X+\Delta/U)$ and is contracted by $\phi$. Hence $\psi$ contracts no divisors, and in particular $\psi\circ\phi$ is $(K_X+\Delta+tA)$-negative for all $t\in[0,t_0]$. This implies that $X'$ is a minimal model of $(X,\Delta+tA)$ over $U$ for all $t\in[0,t_0]$. Note that then $\mathbf{B}(K_X+\Delta+tA/U)$ has the same divisorial components for all $t\in [0,t_0]$.

\smallskip

Now choose $0<t_1<t_0$ such that $(X,\Delta+t_1A)$ is klt and run a minimal model program of $(X,\Delta+t_1A)$ with scaling of $A$ over $U$. By \cite{BCHM}, the outcome $\phi:X\dashrightarrow \tilde{X}$ exists and is a minimal model of $(X,\Delta+t_1A)$ over $U$. Since being $(K_X+\Delta+tA)$-negative is an open condition and $K_{\tilde{X}}+\tilde{\Delta}+t\tilde{A}:=\phi_*(K_X+\Delta+tA)$ is nef over $U$ for $t\in[t_1,t_0]$, by picking $t_0>0$ smaller if necessary we can assume that $\tilde{X}$ is a minimal model of $(X,\Delta+tA)$ over $U$ for all $t\in[t_1,t_0]$. Since $\mathbf{B}(K_X+\Delta+tA/U)$ has the same divisorial components for all $t\in [0,t_0]$, $X'$ and $\tilde{X}$ are isomorphic in codimension one. {\bf For each $t\in[t_1,t_0]$}, by Theorem \ref{FLOP} we may decompose the birational map $X'\dashrightarrow \tilde{X}$ over $U$ into \emph{possibly different} sequences $S_t$ of $(K_{X'}+\Delta'+tA')$-flops over $U$ as $X'$ and $\tilde{X}$ are both minimal models of $(X,\Delta+tA)$ over $U$. Since $\Delta'+tA'$ is big over $U$ for any $t\in[t_1,t_0]$ and each outcome of a $(K_{X'}+\Delta'+tA')$-flop over $U$ is also a minimal model of $(X,\Delta+tA)$ over $U$, by finiteness of models in \cite{BCHM} we can only have finitely many $(K_{X'}+\Delta'+tA')$-flop over $U$ as $t$ ranges in $[t_1,t_0]$. In particular, we can find an uncountable subset $T_1\subseteq [t_1,t_0]$ such that for all $t\in T_1$, the first $(K_{X'}+\Delta'+tA')$-flops over $U$ of the corresponding sequences $S_t$'s are all the same. Note that those curves contracted by this flop then have trivial intersection with $A'$ and hence this flop is a $(K_{X'}+\Delta')$-flop over $U$. As each sequence $S_t$ is finite, inductively we can find a $t^*\in[t_1,t_0]$ such that all the steps of the sequence $S_{t^*}$ connecting $X'$ and $\tilde{X}$ are $(K_{X'}+\Delta')$-flops over $U$. Since $X'$ is a minimal model of $(X,\Delta)$ over $U$, we then also have that $\tilde{X}$ is a minimal model of $(X,\Delta)$ over $U$. In particular, this shows that the minimal model program of $(X,\Delta)$ with scaling of $A$ over $U$ terminates.
\end{proof}

\begin{cor}\label{AB} Let $(X,\Delta)$ be a klt pair over $U$. Suppose that $(X,\Delta)$ has a good minimal model over $U$, then there exists a $t_0>0$ such that: if $\tilde{X}$ is a minimal model of $(X,\Delta+tA)$ over $U$ for all $t\in[\alpha,\beta]$ for some $0\leq\alpha<\beta\leq t_0$, then $\tilde{X}$ is a minimal model of $(X,\Delta+tA)$ over $U$ for all $t\in[0,t_0]$. In particular, the set of all such minimal models $\tilde{X}$ is finite.
\end{cor}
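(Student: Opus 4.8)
The plan is to use the uniform minimal model furnished by the proof of Proposition \ref{TER} as an anchor for small $t$. Fix the constant $t_0>0$ produced there, shrinking it if necessary so that $(X,\Delta+t_0A)$ is klt; recall that the proof of Proposition \ref{TER} produces a birational contraction $\phi':X\dashrightarrow X'$ over $U$ (the model denoted $X'$ there) which is simultaneously a minimal model of $(X,\Delta+tA)$ over $U$ for every $t\in[0,t_0]$, and that $\mathbf{B}(K_X+\Delta+tA/U)$ has the same divisorial part throughout $[0,t_0]$. In particular $K_{X'}+\Delta'+tA'$ is nef over $U$ for all $t\in[0,t_0]$, where $\Delta'=\phi'_*\Delta$ and $A'=\phi'_*A$. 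Given the hypothetical model $\tilde X$, I must show it agrees with $X'$ to the extent that it too is a minimal model of $(X,\Delta+tA)$ over $U$ for every $t\in[0,t_0]$.

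First I would compare $\tilde X$ and $X'$ through a common resolution $p:W\to X'$, $q:W\to\tilde X$, and set
\[
F_t:=p^*(K_{X'}+\Delta'+tA')-q^*(K_{\tilde X}+\tilde\Delta+t\tilde A),
\]
an $\RR$-divisor on $W$ depending affine-linearly on $t$, say $F_t=F_0+tF_A$. For each $t\in[\alpha,\beta]$ both $X'$ and $\tilde X$ are minimal models of the klt pair $(X,\Delta+tA)$ over $U$, so by Theorem \ref{FLOP} the induced map $X'\dashrightarrow\tilde X$ over $U$ decomposes into $(K_{X'}+\Delta'+tA')$-flops, each of which is crepant for $K_{X'}+\Delta'+tA'$; hence $F_t=0$ for all $t\in[\alpha,\beta]$. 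Evaluating at two distinct points of the nondegenerate interval $[\alpha,\beta]$ forces $F_A=0$ and $F_0=0$, so in fact $F_t\equiv0$. Consequently, for every $t\in[0,t_0]$ we have $q^*(K_{\tilde X}+\tilde\Delta+t\tilde A)=p^*(K_{X'}+\Delta'+tA')$, which is nef over $U$ because $K_{X'}+\Delta'+tA'$ is; as $q$ is surjective this descends to show $K_{\tilde X}+\tilde\Delta+t\tilde A$ is nef over $U$.

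It remains to verify the other conditions of Definition \ref{M} for $X\dashrightarrow\tilde X$ and each $t\in[0,t_0]$. Normality, $\QQ$-factoriality, and the fact that no divisor is extracted are independent of $t$ and already hold because $\tilde X$ is a minimal model for $t\in[\alpha,\beta]$; moreover $X'$ and $\tilde X$ are isomorphic in codimension one, so they have the same exceptional divisors over $X$. Since $F_t\equiv0$ exhibits $X'\dashrightarrow\tilde X$ as crepant for $K_{X'}+\Delta'+tA'$, the discrepancies satisfy $a(D,X',\Delta'+tA')=a(D,\tilde X,\tilde\Delta+t\tilde A)$ for every valuation $D$; combined with the $(K_X+\Delta+tA)$-negativity of $\phi'$, which holds for all $t\in[0,t_0]$, this yields the negativity of $X\dashrightarrow\tilde X$. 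Hence $\tilde X$ is a minimal model of $(X,\Delta+tA)$ over $U$ for all $t\in[0,t_0]$. Finally, each such $\tilde X$ is in particular a minimal model of the klt pair $(X,\Delta+t_0A)$ over $U$, whose boundary $\Delta+t_0A$ is big over $U$, and finiteness of models \cite{BCHM} then gives that there are only finitely many such $\tilde X$.

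The main obstacle is the middle step: pinning down that the comparison divisor $F_t$ vanishes identically. This rests on reading off from Theorem \ref{FLOP} that over the whole subinterval $[\alpha,\beta]$ the two models differ by flops crepant for the \emph{full} divisor $K_{X'}+\Delta'+tA'$ (not merely for $K_{X'}+\Delta'$), and then exploiting affine-linearity in $t$ to propagate the vanishing to all of $[0,t_0]$. Once $F_t\equiv0$ is in hand, descending nefness through $q$ and transferring negativity across the crepant map $X'\dashrightarrow\tilde X$ are routine, as is the concluding appeal to finiteness of models.
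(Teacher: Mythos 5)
Your proof is correct, and although it shares the paper's anchor---the uniform minimal model $X'$ of $(X,\Delta+tA)$ over $U$ for all $t\in[0,t_0]$ produced in Proposition \ref{TER}---the way you transfer minimality from $[\alpha,\beta]$ to all of $[0,t_0]$ is genuinely different. The paper reruns the combinatorial argument from the proof of Proposition \ref{TER}: it decomposes $X'\dashrightarrow\tilde X$ into $(K_{X'}+\Delta'+tA')$-flops and concludes that each flop is trivial against both $K_{X'}+\Delta'$ and $A'$ separately, so that the chain consists of $(K_{X'}+\Delta'+tA')$-flops for every $t\in[0,t_0]$. You instead work with the single $\RR$-divisor $F_t=p^*(K_{X'}+\Delta'+tA')-q^*(K_{\tilde X}+\tilde\Delta+t\tilde A)$ on a common resolution: two minimal models of the same klt pair are crepant (this is \cite[Theorem 3.52]{KM} directly, so your detour through Theorem \ref{FLOP} is more machinery than you need), whence $F_t=0$ on $[\alpha,\beta]$, and affine-linearity in $t$ propagates the vanishing to all $t$. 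Both arguments rest on the same principle---a statement valid for two distinct values of $t$ splits into separate statements about the $K+\Delta$-part and the $A$-part---but you implement it linearly on pullback divisors rather than on intersection numbers of flopped curves, which avoids any appeal to finiteness of models until the concluding finiteness claim and makes the verification of conditions (3) and (4) of Definition \ref{M} for all $t\in[0,t_0]$ completely explicit. The only thing your route does not record is the explicit chain of $(K_{X'}+\Delta')$-flops connecting $X'$ and $\tilde X$, which the paper's version produces as a byproduct; it is not needed for the statement of the corollary itself.
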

\begin{proof} By Proposition \ref{TER}, there exists a $t_0>0$ and a birational map $X\dashrightarrow X'$ over $U$ such that $X'$ is a minimal model of $(X,\Delta+tA)$ over $U$ for all $t\in[0,t_0]$. By the proof of Proposition \ref{TER}, there is a finite sequence of $(K_{X'}+\Delta')$-flops over $U$ connecting $X'\dashrightarrow \tilde{X}$ which are also $A'$-trivial and hence $(K_{X'}+\Delta'+tA')$-flops over $U$ for all $t\in[0,t_0]$, where $\Delta'$ and $A'$ are the proper transforms of $\Delta$ and $A$ on $X'$. Therefore the corollary follows. Note that $X'$ and the varieties given by $(K_{X'}+\Delta')$-flops over $U$ appearing in the proof are all minimal models of the big pair $(X,\Delta+t_0A)$ over $U$ and hence by \cite{BCHM} there can only be finitely many of these.
\end{proof}

\begin{prop}\label{AFS} Let $f:X\rightarrow Y$ be an algebraic fiber space of normal quasi-projective varieties such that $X$ is $\QQ$-factorial with klt singularities and projective over $Y$. Suppose that the general fiber $F$ of $f$ has a good minimal model, then $X$ is birational to some $X'$ over $Y$ such that the general fiber of $f':X'\rightarrow Y$ is a good minimal model.
\end{prop}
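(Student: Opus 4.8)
The plan is to pass to the generic fibre, where a full minimal model program is available, produce a good minimal model there, and then spread it out over a dense open subset of $Y$ before compactifying over $Y$. Write $\eta$ for the generic point of $Y$ and $X_\eta$ for the generic fibre of $f$, a normal $\QQ$-factorial klt projective variety over $K(Y)$. Since the conclusion concerns only the general fibre of $f'$, and a general closed point of $Y$ lies in any prescribed dense open subset, it is enough to construct a model $X'$ over $Y$ whose restriction over some dense open $V\subseteq Y$ has good minimal models of $F$ as its general fibres.

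First I would show that $X_\eta$ has a good minimal model over $K(Y)$. This is where the hypothesis on the general fibre $F$ is used: existence of a good minimal model is invariant under passage between a general closed fibre and the geometric generic fibre, so one transfers the good minimal model of $F$ to $X_\eta$ (equivalently, one may spread a good minimal model of $F$ out over a dense open and read off the generic member). Granting this, since $X_\eta$ has a good minimal model, a $K_{X_\eta}$-minimal model program with scaling of an ample divisor terminates after finitely many flips and divisorial contractions by Proposition \ref{TER} (whose proof is valid over any field of characteristic zero), yielding a good minimal model $\psi_\eta\colon X_\eta\dashrightarrow X'_\eta$ with $K_{X'_\eta}$ semiample.

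Next I would spread this finite sequence out and compactify. Each step is a relative $\mathbf{Proj}$ of a finitely generated sheaf of algebras, and finite generation together with $\QQ$-factoriality, negativity, and the property of extracting no divisors all descend to a dense open $V\subseteq Y$; this produces a birational contraction $X_V\dashrightarrow X'_V$ over $V$, with $X_V=f^{-1}(V)$ and $X'_V\to V$ projective, realizing $\psi_\eta$ fibrewise. After shrinking $V$, for general $y\in V$ the map $X_y\dashrightarrow X'_y$ extracts no divisors and is $K_{X_y}$-negative, while $K_{X'_y}$ is semiample because semiampleness of $K_{X'_\eta}$ spreads out; hence the general fibre of $X'_V\to V$ is a good minimal model of $F$ (Proposition \ref{GMM} confirms goodness, any minimal model of $F$ being good). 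Finally, choose a normal variety $X'$ projective over $Y$ and restricting to $X'_V$ over $V$, for instance the closure of $X'_V$ in a relatively projective compactification; the birational map $X\dashrightarrow X'_V$ over $V$ extends to a birational map $X\dashrightarrow X'$ over $Y$, and since $X_\eta$ is geometrically connected so is $X'_\eta$, so that $f'\colon X'\to Y$ is again an algebraic fibre space, now with general fibre the good minimal model constructed above.

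The main obstacle is the first step, namely deducing that the generic fibre has a good minimal model from the assumption on the general closed fibre: this is an invariance statement for the existence of good minimal models under specialization in a family (equivalently under base-field extension), and it is the only ingredient not supplied directly by the minimal model program on a fixed variety. By contrast, the spreading-out and compactification are routine, provided one keeps track that only finitely many steps occur---which is exactly what Proposition \ref{TER} secures---and that restriction to general fibres preserves negativity, the no-divisor-extraction condition, and semiampleness.
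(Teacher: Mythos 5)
Your strategy (pass to the generic fibre, run a terminating MMP there, spread out, compactify) is genuinely different from the paper's, and it hinges on the step you yourself flag but do not supply: that the generic fibre $X_\eta$ has a good minimal model over $K(Y)$ because the general closed fibre does. This is not a formal consequence of the hypothesis. A good minimal model of a closed fibre $X_y$ is a priori just a variety over $\CC$ with no relation to the family; to transport it to $X_\eta$ one needs (i) the identification of a \emph{very general} closed fibre with the geometric generic fibre $X_{\bar\eta}$ via an abstract field isomorphism $\CC\cong\overline{K(Y)}$, and then (ii) descent of the existence of a good minimal model from $\overline{K(Y)}$ to the non-closed field $K(Y)$, together with the validity of Proposition \ref{TER} (hence of the \cite{BCHM} machinery) over such a field. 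None of this is in the paper, and (ii) is itself an invariance statement of essentially the same nature as the one you are trying to establish, so the argument as written is circular at its load-bearing point. There is also an unaddressed issue on the spreading-out side: Definition \ref{M} requires a minimal model to be $\QQ$-factorial, and $\QQ$-factoriality of $X'_\eta$ does not automatically pass to the general closed fibre $X'_y$ of the spread-out family; semiampleness does spread out as you say, but without $\QQ$-factoriality and the negativity bookkeeping on $X'_y$ you have not verified that $X'_y$ is a minimal model of $X_y$ in the sense used here.

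The paper avoids the generic fibre entirely: it runs a $K_X$-MMP with scaling of an ample divisor over $Y$ on the total space, observes that each step either is trivial on the general fibre $F$ or induces a step of the same type (divisorial or small) on $F$, and rules out infinitely many steps nontrivial on $F$ by the finiteness of the resulting models $F_i$ supplied by Corollary \ref{AB} (a consequence of $F$ having a good minimal model) combined with the argument of \cite[Lemma 4.2]{BCHM}. After finitely many steps all remaining flips are trivial on $F$, and the general fibre of the resulting $X'\rightarrow Y$ is a minimal model of $F$, hence good by Proposition \ref{GMM}. This stays over $\CC$ throughout and uses the hypothesis on $F$ only through Corollary \ref{AB}. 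To salvage your route you would need to actually prove the base-change/specialization invariance you identified as the main obstacle; absent that, the proof is incomplete.
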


\begin{proof} Pick an ample divisor $H$ on $X$ and run a minimal model program of $X$ with scaling of $H$ over $Y$. Suppose that ${\rm cont}_R:X\rightarrow W$ is the contraction morphism corresponding to an extremal ray $R\in\overline{\rm NE}(X/Y)$. If $R$ doesn't give an extremal contraction of $F$, then ${\rm cont}_R|_F={\rm id}_F$. Otherwise it's easy to see that ${\rm cont}_R$ and ${\rm cont}_R|_F$ must be of the same type (divisorial or small). Suppose that we have a sequence of infinitely many flips whcih are nontrivial on the general fiber $F$ with $t_i>t_{i+1}>0$ such that $K_{F_i}+tH_i|_{F_i}$ is nef for all $t\in[t_{i+1},t_i]$. Since $F$ has a good minimal model, by Corollary \ref{AB} the set of such $F_i$'s is finite (modulo isomorphisms) and each $F_i$ is a good minimal model of $F$. Then we get a contradiction by the same argument as in the last step of the proof of \cite[Lemma 4.2]{BCHM}. Hence after finitely many steps, we may assume that all flips are trivial on the general fiber, and so we get an algebraic fiber space $f':X'\rightarrow Y$ such that the general fiber is a good minimal model.
\end{proof}

\subsection{Degenerate Divisors}

This part concerns the negativity property of a ``degenerate'' divisor. The following definition is taken from \cite{Tak}.

\begin{definition} Let $f:X\rightarrow Y$ be a proper surjective morphism of normal varieties and $D\in {\rm WDiv}_{\RR}(X)$ be an effective Weil divisor. Then
  \begin{itemize}
    \item $D$ is $f$-exceptional if ${\rm codim(Supp}(f(D)))\geq 2$.
    \item $D$ is of insufficient fiber type if ${\rm codim(Supp}(f(D)))=1$ and there exists a prime divisor $\Gamma\nsubseteq{\rm Supp}(D)$ such that $f(\Gamma)\subseteq{\rm Supp}(f(D))$ has codimension one in $Y$.
  \end{itemize}
In either of the above cases, we say that $D$ is \emph{degenerate}. In particular, a degenerate divisor is always assumed to be effective.
\end{definition}

\begin{lem}\label{CONT} Let $f:X\rightarrow Y$ be an algebraic fiber space of normal projective varieties such that  $X$ is $\QQ$-factorial. Then for a degenerate Weil divisor $D$ on $X$, we can always find a component $F\subseteq {\rm Supp}(D)$ which is covered by curves contracted by $f$ and intersecting $D$ negatively. In particular, we have $F\subseteq \mathbf{B}_{-}(D/Y)$, the diminished base locus of $D$ over $Y$.
\end{lem}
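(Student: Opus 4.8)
The plan is to reduce the statement to a surface carrying a contraction, where it becomes the negative (semi)definiteness of the intersection form on contracted curves. Fix a component $F\subseteq\supp(D)$ and set $W:=f(F)$, which has positive codimension since $D$ is degenerate, so that a general fibre of $F\to W$ is positive-dimensional and $F$ is swept out by $f$-contracted curves. I would cut $X$ and $Y$ by general members of very ample linear systems to arrive at a normal projective surface $S$ with a morphism $g:=f|_S\colon S\to T$ such that a general $f$-contracted curve $C\subseteq F$ survives as a $g$-contracted curve on $S$, and intersection numbers of $D$ against the curves under consideration are computed on $S$ by the projection formula. The two cases of degeneracy dictate the dimension of $T$: if $D$ is of insufficient fibre type I cut $Y$ down to a \emph{curve} $T$ through a general point $w$ of the top-dimensional component of $f(D)$, while if $D$ is $f$-exceptional (so $\dim f(D)\le\dim Y-2$) I cut $Y$ down to a \emph{surface} $T$ through such a $w$. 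In either case the cut is taken through $w$ precisely because a general linear section would miss the low-dimensional set $f(D)$.

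The point of these dimension choices is twofold. First, because $T$ meets $f(D)$ in a finite set, every component of $D_S:=D|_S$ is contracted by $g$; this is what allows the intersection form on $g$-contracted curves to control $D_S$, and it is why the two cases require bases of different dimension. Second, the degeneracy hypothesis guarantees that $D_S$ does not fill the relevant fibre of $g$: in the insufficient-fibre-type case the prime divisor $\Gamma\nsubseteq\supp(D)$ with $f(\Gamma)$ of codimension one restricts to a component of the fibre of the fibration $g\colon S\to T$ that is absent from $D_S$; in the $f$-exceptional case $g$ is generically finite onto the surface $T$ and the $g$-contracted curves are $g$-exceptional, so there is no complete fibre to avoid. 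In both cases $D_S$ is a nonzero effective divisor supported on $g$-contracted curves.

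On $S$ the intersection form restricted to the $g$-contracted curves $C_1,\dots,C_r$ is negative semidefinite; when $\dim T=1$ its kernel is spanned by the classes of the fibres $g^{*}(t)$ (Zariski's lemma), and when $\dim T=2$ it is negative definite (Mumford's lemma, after resolving the singularities of $S$ or using Mumford's pairing on the normal surface). Writing $D_S=\sum a_iC_i$ with $a_i>0$, if every $D_S\cdot C_i\ge0$ then $D_S^2=\sum_i a_i(D_S\cdot C_i)\ge0$, which forces $D_S^2=0$ and hence $D_S$ into the kernel of the form. When $\dim T=1$ this makes $D_S$ a nonnegative rational combination of full fibres, so $\supp(D_S)$ contains a whole fibre, contradicting the previous paragraph; when $\dim T=2$ it makes $D_S=0$, again a contradiction. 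Therefore some component $C=C_i$ satisfies $D_S\cdot C<0$, and by the projection formula $D\cdot C<0$ for the corresponding curve on $X$, which is contracted by $f$.

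Letting the general members defining the cuts vary, and $w$ vary in $W$, the curves $C$ produced this way sweep out $F$, so $F$ is covered by $f$-contracted curves meeting $D$ negatively. For the last assertion, fix $A$ ample over $Y$; for each such $C$ and all sufficiently small $\varepsilon>0$ one has $(D+\varepsilon A)\cdot C<0$, and since $C$ is $f$-contracted this number depends only on the class of $D+\varepsilon A$ over $Y$, so $C$ lies in the support of every effective divisor $\RR$-linearly equivalent to $D+\varepsilon A$ over $Y$; hence $C\subseteq\mathbf{B}(D+\varepsilon A/Y)$ and, taking the union over the covering family and over $\varepsilon$, $F\subseteq\mathbf{B}_{-}(D/Y)$. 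The hard part will be the reduction in the first paragraph, and specifically the $f$-exceptional case: there the fibre of $f$ over $w$ jumps in dimension, and one must match the number of very ample cuts to the size of this jump so that $S$ meets the fat fibre in a \emph{curve} (not a higher-dimensional piece that would drop $\dim f(S)$) while $D_S$ stays supported on $g$-exceptional curves. Ensuring through Bertini-type arguments that $S$ is normal, that the chosen curve remains contracted, and that no component of $D_S$ becomes horizontal is the main technical obstacle; the singularities of $S$ are only a secondary nuisance, handled by passing to a resolution.
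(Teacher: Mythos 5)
Your proposal is correct and follows essentially the same route as the paper: cut $X$ and $Y$ by general hyperplanes (through a point of $f(D)$) to reduce to a surface mapping either birationally to a surface or fibering over a curve, apply negative definiteness of the exceptional intersection matrix resp.\ Zariski's lemma plus the observation that $D$ cannot contain a full fibre in the insufficient-fibre-type case, and then deduce $F\subseteq \mathbf{B}_{-}(D/Y)$ by perturbing with a small multiple of an ample divisor exactly as in the paper. The reduction step you flag as the main technical obstacle is also the step the paper treats briskly, so there is no substantive divergence.
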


\begin{proof} Write $D=\Sigma r_iD_i$ with $r_i>0$ and $D_i\in {\rm Div}(X)$ prime.

\smallskip

Case 1: Suppose $D$ is $f$-exceptional, and hence $\dim Y\geq 2$. Cutting by general hyperplanes, we reduce to a birational morphism of surfaces with $E=\Sigma r_j\tilde{E}_j$, where $\tilde{E}_j=D_j\cap H_1\cap ...\cap H_{n}$ may be nonreduced and reducible and $E=D\cap H_1\cap ...\cap H_{n}$. Note that we may assume $P:=f(E)$ is a point. By Hodge index theorem (cf. \cite[Corollary 2.7]{LB}), the intersection matrix of irreducible components of $f^{-1}(P)$ is negative-definite. Then $(E)^2<0$, and hence $(\tilde{E}_j.D)=(\tilde{E}_j.E)<0$ for some $j$. In particular, $(\tilde{E}_j.D_j)<0$ and $D_j$ is covered by curves intersecting $D$ negatively.

\smallskip

Case 2: Suppose $D$ is of insufficient fiber type. Cutting by general hyperplanes, we reduce to a morphism from a surface to a curve with $E=\Sigma r_j\tilde{E}_j$ supported on fibers, where $E_j=D_j\cap H_1\cap...\cap H_{n}$ may be nonreduced and reducible and $E=D\cap H_1\cap ...\cap H_{n}$. Then by \cite[Corollary 2.6]{LB}, we have $(E)^2\leq 0$. But ${\rm Supp}(E)$ can not be the whole fiber, hence we can find $\Gamma$ an effective divisor having no common components with $E$ such that ${\rm Supp}(E+\Gamma)=f^{-1}(f(E))$. For $F:=f^*(f_*(E))$, then we can find $a$ and $b$ two positive real numbers such that $aF\leq E+\Gamma\leq bF$. If $(E)^2=0$, then $E$ is nef and hence $E.F=0$ implies $E.(E+\Gamma)=0$. But we have $E.\Gamma>0$ which implies $(E)^2<0$, a contradiction. Hence $(E)^2<0$ and the same argument as in case 1 applies.

\smallskip

To prove that $D_j\subseteq \mathbf{B}_{-}(D/Y)$, we pick an ample divisor $A$ on $X$ and $\epsilon>0$ a small rational number such that $\tilde{E}_j.(D+\epsilon A)<0$. Note that we then also have  $\tilde{E}_j.(D+\epsilon A+f^*R)<0$ for any $\RR$-Cartier divisor $R$ on $Y$. In particular, this shows that $\tilde{E}_j\subseteq \mathbf{B}(D+\epsilon A/Y)$. As $\tilde{E}_j$ passes through a general point of $D_j$, we have $D_j\subseteq \mathbf{B}(D+\epsilon A/Y)\subseteq\mathbf{B}_{-}(D/Y)$.
\end{proof}

\section{Nonvanishing Theorems}

\begin{thm}\label{NONV} Let $X$ be a smooth projective irregular variety with $\alpha:=alb_X:X\rightarrow A:=Alb(X)$ the Albanese morphism and $\alpha ':X\rightarrow Y$ with general fiber $F$ be the Stein factorization of $\alpha:X\rightarrow \alpha(X)\subseteq A$. Suppose $\kappa(F)\geq0$, then $\kappa(X)\geq0$.
\end{thm}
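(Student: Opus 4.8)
The aim is to exhibit a nonzero section of $\omega_X^{\otimes m}$ for some $m>0$. The plan is to use generic vanishing on the Albanese target $A$ to first produce a section of $\omega_X^{\otimes m}$ twisted by a class in $\Pic^0(X)$, and then to remove the twist.

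First I would reduce the theorem to the statement that $V_m(K_X)\neq\emptyset$ for some $m>0$. Indeed, suppose $P\in V_m(K_X)$ is a \emph{torsion} point, say of order $k$, and let $0\neq s\in H^0(X,\omega_X^{\otimes m}\otimes P)$. Then $s^{\otimes k}$ is a nonzero section of $\omega_X^{\otimes mk}\otimes P^{\otimes k}\cong\omega_X^{\otimes mk}$, whence $\kappa(X)\ge0$. By the structure theorem for the loci $V_m(K_X)$ in \cite{CH} — each irreducible component is a torsion translate of an abelian subvariety of $\Pic^0(X)$ — any nonempty $V_m(K_X)$ contains a torsion point, so this reduction is legitimate.

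Next I would manufacture a nonzero sheaf on $A$ out of the hypothesis $\kappa(F)\ge0$. Choose $m>0$ with $h^0(F,mK_F)>0$. For a general fibre $F$ of $\alpha'$ adjunction gives $\omega_X|_F\cong\omega_F$, so the torsion-free sheaf $\alpha'_*\omega_X^{\otimes m}$ on $Y$ has generic rank $h^0(F,mK_F)>0$ and is nonzero. Pushing forward by the finite map $g\colon Y\to\alpha(X)\subseteq A$ of the Stein factorization, the coherent sheaf $\mathcal F:=\alpha_*\omega_X^{\otimes m}=g_*\bigl(\alpha'_*\omega_X^{\otimes m}\bigr)$ on $A$ is nonzero. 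By the projection formula $h^0(A,\mathcal F\otimes P)=h^0(X,\omega_X^{\otimes m}\otimes\alpha^*P)$ for every $P\in\hat A$, so each $P\in V^0(\mathcal F)$ yields $\alpha^*P\in V_m(K_X)$. It thus suffices to prove $V^0(\mathcal F)\neq\emptyset$.

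The heart of the matter — and the step I expect to be the main obstacle — is this last nonvanishing. If $\mathcal F$ were a GV-sheaf on $A$ we would be finished, since a nonzero GV-sheaf has $V^0\neq\emptyset$: in the Pareschi–Popa reformulation of the generic vanishing theorems of \cite{GL1}, \cite{GL2}, \cite{Hac}, the Fourier–Mukai transform of the derived dual of a GV-sheaf is concentrated in a single degree as a coherent sheaf that is nonzero exactly when $\mathcal F\neq0$, and a base-change analysis of this transform shows its support detects $V^0(\mathcal F)$, forcing the latter to be nonempty. For $m=1$ the sheaf $\alpha_*\omega_X$ is indeed a GV-sheaf by \cite{Hac}, so $V^0(\alpha_*\omega_X)\neq\emptyset$. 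The trouble is that for $m\ge2$ the pluricanonical pushforward $\alpha_*\omega_X^{\otimes m}$ need not be a GV-sheaf, so this cannot be applied directly. I would overcome this by the cover technique of \cite{CH}: pass to a desingularized cyclic cover $\widetilde X\to X$ adapted to a general divisor in the relative pluricanonical system, so that on $\widetilde X$ the relevant twisted pluricanonical sections of $X$ become ordinary canonical sections; applying the $m=1$ generic vanishing above to the composite $\widetilde X\to A$, together with the weak positivity of $\alpha'_*\omega_{X/Y}^{\otimes m}$, then forces $V^0(\mathcal F)\neq\emptyset$. Combined with the reduction of the second paragraph, this gives $V_m(K_X)\neq\emptyset$ and hence $\kappa(X)\ge0$.
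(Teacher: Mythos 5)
Your overall architecture matches the paper's: produce a nonzero coherent sheaf on $A$ from the hypothesis $\kappa(F)\ge 0$, prove a generic-vanishing statement forcing $V^0\neq\emptyset$, and then use the structure of $V_m(K_X)$ as a finite union of torsion translates of abelian subvarieties (the paper's Theorem \ref{T}, following \cite{CH}) to replace the twist by a torsion twist and take a power of the section. The first and last steps of your proposal are essentially identical to what the paper does.

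The gap is exactly in the step you flag as the main obstacle, and your proposed resolution does not work as stated. A cyclic cover of $X$ must be branched along a global divisor, and ``a general divisor in the relative pluricanonical system'' does not provide one: $\kappa(F)\ge0$ only gives pluricanonical divisors on the general fibre, and these do not glue to a divisor on $X$ (indeed $|mK_X|=\emptyset$ is precisely what you are trying to rule out). In \cite{CH}, and in the paper's own proof of Theorem \ref{T}, the cover is taken along a divisor in $|m(K_X+P)|$ for a point $P$ \emph{already known} to lie in $V_m(K_X)$; the cover technique is deployed after the nonvanishing, not to prove it, so invoking it here is circular. What weak positivity of $\alpha'_*\omega_{X/Y}^{\otimes N}$ \cite{V1} actually gives globally is a divisor in $|mK_X+\epsilon\alpha^*H|$ for every $\epsilon>0$, and the resulting correction must be controlled uniformly as $\epsilon\to 0$. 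That is the content of the paper's mechanism: a preliminary lemma establishes pseudo-effectivity of $K_X$, Lemma \ref{H} shows the asymptotic multiplier ideals $\mathcal{J}(\|mK_X+\epsilon\alpha^*H\|)$ stabilize for small $\epsilon$, and Lemma \ref{G} proves the Nadel-type vanishing $H^i(A,\mathcal{F}_m\otimes\hat{L}^{\vee})=0$ for the corrected sheaf $\mathcal{F}_m=\alpha_*(\omega_X^m\otimes\mathcal{J}(\|(m-1)K_X+\epsilon\alpha^{*}H\|))$, which by \cite[Corollary 3.2]{Hac} yields $V^0(\mathcal{F}_m)=\emptyset\Rightarrow\mathcal{F}_m=0$. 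One must then also verify that this \emph{corrected} sheaf is still nonzero, which the paper does by restricting the asymptotic multiplier ideal to a general fibre and using $H^0(F,\omega_F^m\otimes\mathcal{J}(\|mK_F\|))=H^0(F,\omega_F^m)>0$. Your uncorrected sheaf $\alpha_*\omega_X^{\otimes m}$ avoids that verification but leaves the generic vanishing for $m\ge2$ unproved; without the multiplier-ideal correction (or an equivalent device), the proof does not close.
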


\begin{lem} Assumptions as in Theorem \ref{NONV}, then $K_X$ is pseudo-effective.
\end{lem}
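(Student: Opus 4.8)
The plan is to reduce the pseudo-effectivity of $K_X$ to the geometric statement that $X$ is not uniruled, and then to force any hypothetical uniruledness of $X$ to propagate down to the general fiber $F$, contradicting $\kappa(F)\geq 0$. The essential input is the theorem of Boucksom, Demailly, Paun and Peternell, which asserts that for a smooth projective variety $X$ the divisor $K_X$ is pseudo-effective if and only if $X$ is \emph{not} uniruled. Thus it suffices to prove that $X$ is not uniruled.

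Suppose for contradiction that $X$ is uniruled; then there is a family of rational curves whose members pass through the general point of $X$. Since $\alpha:X\to A$ maps into the abelian variety $A$, and every morphism $\mathbb{P}^1\to A$ is constant, each such rational curve is contracted by $\alpha$, and therefore lies in a single fiber of $\alpha$, hence in a single fiber of the Stein factorization $\alpha':X\to Y$. Consequently the general fiber $F=\alpha'^{-1}(y)$ is positive-dimensional and is swept out by the rational curves through its general points; that is, $F$ is uniruled. But a smooth projective uniruled variety has $K_F$ non-pseudo-effective (Miyaoka--Mori), so $\kappa(F)=-\infty$, contradicting the hypothesis $\kappa(F)\geq 0$. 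Hence $X$ is not uniruled, and $K_X$ is pseudo-effective.

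The argument is short, and the only step requiring care is the propagation of the covering family from $X$ down to the fiber $F$: one must observe that a curve through a general point $x\in X$, being $\alpha'$-contracted, lies in the fiber $\alpha'^{-1}(\alpha'(x))$, and that a general point of a general fiber of $\alpha'$ is a general point of $X$, so that these curves genuinely cover $F$. I expect this to be the only mild obstacle; the remaining ingredients---the absence of rational curves on abelian varieties and the implication that uniruled implies $\kappa=-\infty$---are standard. Note that generic vanishing is not needed for this lemma; it enters only in upgrading pseudo-effectivity to the stronger nonvanishing conclusion $\kappa(X)\geq 0$ of Theorem \ref{NONV}.
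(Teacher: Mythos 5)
Your proof is correct, but it takes a genuinely different route from the paper's. You reduce pseudo-effectivity of $K_X$ to non-uniruledness via the theorem of Boucksom--Demailly--P\u{a}un--Peternell, and then rule out uniruledness by noting that any covering family of rational curves is contracted by $\alpha$ (abelian varieties contain no rational curves), hence also by $\alpha'$ since $Y\rightarrow\alpha(X)$ is finite, which forces the general fiber $F$ to be uniruled and so to have $\kappa(F)=-\infty$; the points you flag as needing care (that the $\alpha'$-contracted curves through general points lie in, and sweep out, the general fiber) are indeed the only delicate steps and they go through. The paper instead argues via positivity of direct images: $\alpha'_*\omega_{X/Y}^N$ is nonzero (because $\kappa(F)\geq 0$) and weakly positive by Viehweg, so twisting by an arbitrarily small multiple of an ample divisor $H$ on $Y$ makes it big; combined with $\kappa(Y)\geq 0$ (as $Y$ is finite over a subvariety of an abelian variety) this yields $\kappa(K_X+\tfrac{\epsilon}{N}(\alpha')^*H)\geq 0$ for every $\epsilon>0$, and pseudo-effectivity follows by letting $\epsilon\rightarrow 0$. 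Your argument is shorter and more geometric but uses the deep BDPP characterization as a black box; the paper's argument stays within the weak-positivity circle of ideas used throughout Section 3 and gives the slightly stronger effectivity statement $\kappa(K_X+\epsilon(\alpha')^*H)\geq 0$, which is closer in spirit to the nonvanishing $\kappa(X)\geq 0$ that this lemma is a step toward. One small attribution quibble: the implication ``uniruled implies $\kappa=-\infty$'' is the elementary direction (all plurigenera vanish on a uniruled variety); Miyaoka--Mori is usually cited for the converse-type numerical criterion for uniruledness.
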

\begin{proof} We have $\alpha_*'\omega_{X/Y}^N\neq0$ and is weakly positive by \cite{V1}. Hence for any $\epsilon>0$ and $H$ ample on $Y$, $\alpha_*'\omega_{X/Y}^N\otimes(\epsilon H)$ is big. As $Y$ is finite over $\alpha(X)$, a subvariety in $A$, we have $\kappa(Y)\geq0$ and hence $\alpha_*'\omega_{X}^N\otimes(\epsilon H)$ is also big. In particular $\kappa(K_X+\frac{\epsilon}{N}(\alpha')^*H)\geq0$ for any $\epsilon>0$, and hence $K_X$ is pseudo-effective.
\end{proof}

\begin{lem}\label{H} Let X be a smooth projective variety. Suppose $\{D_k\}$ is a collection of effective $\QQ$-divisors with $k\in\mathbf{N}$ such that the corresponding multiplier ideal sheaves $\mathcal{J}_{k}:=\mathcal{J}(D_{k})$ satisfy $\mathcal{J}_{k}\subseteq\mathcal{J}_{k'}$ whenever $k\leq k'$. If there exists a line bundle $L$ such that $L-D_{k}$ is nef and big for all $k>0$, then $\bigcap_{i>0}\mathcal{J}_{i}=\mathcal{J}_{k}$ for $k$ sufficiently large.
\end{lem}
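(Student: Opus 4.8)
The plan is to reduce the statement to a stabilization assertion and then force the stabilization by a regularity argument built on Nadel vanishing, which is the only place the hypothesis on $L$ enters. Since the $\mathcal{J}_i$ form a nested family of coherent ideal sheaves, $\bigcap_{i>0}\mathcal{J}_i$ is again a coherent ideal sheaf contained in every $\mathcal{J}_k$, and the claim is exactly that this intersection is attained at a finite stage, i.e.\ that the chain $\{\mathcal{J}_k\}$ stabilizes. The difficulty is that a descending chain of coherent subsheaves of $\mathcal{O}_X$ need not stabilize on a Noetherian scheme, so some positivity input is genuinely needed; this is precisely what the uniform nef-and-big hypothesis on $L-D_k$ will supply.

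First I would fix a very ample divisor $A$ on $X$ and record that for every integer $j\ge 0$ and every $k>0$ the divisor $(L-D_k)+jA$ is nef and big. Applying Nadel vanishing (cf.\ \cite{L2}) to the line bundle $L+jA$ and the boundary $D_k$ then gives $H^i(X,\mathcal{O}_X(K_X+L+jA)\otimes\mathcal{J}_k)=0$ for all $i>0$, all $j\ge 0$, and all $k$, and the crucial feature is that this vanishing is \emph{uniform in} $k$. I would then feed these vanishings into the Castelnuovo--Mumford regularity criterion with respect to $A$: taking $m=\dim X$ and $\mathcal{M}:=\mathcal{O}_X(K_X+L+mA)$, the sheaf $\mathcal{M}\otimes\mathcal{J}_k$ is $0$-regular and hence globally generated, again for every $k$ simultaneously.

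With this fixed positive twist in hand the conclusion follows quickly. The global sections $H^0(X,\mathcal{M}\otimes\mathcal{J}_k)$ form a nested chain of finite-dimensional subspaces of the fixed space $H^0(X,\mathcal{M})$, so they stabilize: there is a $k_0$ with $H^0(X,\mathcal{M}\otimes\mathcal{J}_k)=H^0(X,\mathcal{M}\otimes\mathcal{J}_{k_0})$ for all $k\ge k_0$. Because each $\mathcal{M}\otimes\mathcal{J}_k$ is globally generated, it coincides with the image of the evaluation map $H^0(X,\mathcal{M}\otimes\mathcal{J}_k)\otimes\mathcal{O}_X\to\mathcal{M}$; equality of the section spaces for $k\ge k_0$ therefore upgrades to equality of these images inside $\mathcal{M}$, whence $\mathcal{J}_k=\mathcal{J}_{k_0}$ for all $k\ge k_0$. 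Thus the chain stabilizes and $\bigcap_{i>0}\mathcal{J}_i=\mathcal{J}_{k_0}=\mathcal{J}_k$ for $k$ sufficiently large, as claimed.

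The step I expect to be the main obstacle is securing the \emph{uniformity} in $k$ of the global generation in the second paragraph. Everything hinges on the regularity bound $m=\dim X$ being independent of $k$, and this independence is exactly what the hypothesis ``$L-D_k$ nef and big for \emph{all} $k>0$'' guarantees, through the simultaneous Nadel vanishing; without a single line bundle $L$ dominating the whole family the admissible twist could drift with $k$ and the section-space argument would collapse. A secondary point to verify is that tensoring the nested inclusions $\mathcal{J}_{k+1}\subseteq\mathcal{J}_k$ by the line bundle $\mathcal{M}$ preserves them and that global generation really does let one recover the ideal from its twisted sections, but these are routine once the uniform regularity is in place.
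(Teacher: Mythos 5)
Your proposal is correct and follows essentially the same route as the paper's proof (which is itself reproduced from \cite[Proposition 5.1]{Hac}): uniform Nadel vanishing for the twists $K_X+L+jA$, Castelnuovo--Mumford regularity to get global generation of $\mathcal{O}_X(K_X+L+mA)\otimes\mathcal{J}_k$ uniformly in $k$, and stabilization of the nested section spaces inside a fixed finite-dimensional $H^0$. The only cosmetic difference is the choice of twist ($L+mA$ with $m=\dim X$ versus the paper's $L+(n+1)H$), and you spell out more explicitly the step where global generation upgrades equality of section spaces to equality of ideals.
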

\begin{proof} The proof is taken from \cite[Proposition 5.1]{Hac}. We reproduce the proof here for the convenience of the reader. Take a sufficiently ample divisor $H$ on $X$ and consider the line bundle $M=L+(n+1)H$ for $n={\rm dim}(X)$, then
   \begin{align*} M-D_{k}-(iH)\equiv L-D_{k}+(n-i+1)H
   \end{align*}
is nef and big for all $k>0$ and $1\leq i\leq n$. Hence we have $ H^i(X,\mathcal{O}_X(K_X+M-iH)\otimes\mathcal{J}_{k})=0$ for all $i>0$ by Nadel vanishing, and then $\mathcal{O}_X(K_X+M)\otimes\mathcal{J}_{k}$ is generated by global sections by Mumford regularity. In particular, if $\mathcal{J}_{k}\neq\mathcal{J}_{k'}$ for $k\leq k'$, then we get a strict inclusion $H^0(X,\mathcal{O}_X(K_X+M)\otimes\mathcal{J}_{k})\subseteq H^0(X,\mathcal{O}_X(K_X+M)\otimes\mathcal{J}_{k'})$ of $\CC$ vector spaces. But this can not happen for infinitely many times, hence the lemma follows.
\end{proof}

\begin{lem}\label{G} The same setting as in Theorem \ref{NONV}. Then for $H$ an ample divisor on $A$ and a non-negative integer $m$, $\mathcal{J}(\|mK_X+\epsilon\alpha^{*}H\|)$ is independent of $\epsilon\in\QQ$ for any $\epsilon>0$ sufficiently small. Hence we can define the sheaf
   \begin{align*} \mathcal{F}_m:=\alpha_*(\omega_X^m\otimes\mathcal{J}(\|(m-1)K_X+\epsilon\alpha^{*}H\|))
   \end{align*}
on $A$ for $\epsilon>0$ a sufficiently small rational number. Then for $L$ any sufficiently ample line bundle on the dual abelian variety $\hat{A}$ with $\hat{L}$ the Fourier-Mukai transform of $L$ on $A$, we have $H^{i}(A,\mathcal{F}_m\otimes\hat{L}^{\vee})=0$ for all $i>0$. From \cite[Corollary 3.2]{Hac}, we then have for any non-negative integer $m$ the inclusions:
   \begin{align*} V^0(\mathcal{F}_m)\supseteq V^1(\mathcal{F}_m)\supseteq...\supseteq V^n(\mathcal{F}_m).
   \end{align*}
In particular, $V^{0}(\mathcal{F}_m)=\phi$ implies $\mathcal{F}_m$=0.
\end{lem}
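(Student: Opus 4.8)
The plan is to treat the clauses of the lemma in turn: the $\epsilon$-independence of the asymptotic multiplier ideals, the definition of $\mathcal{F}_m$, the cohomological vanishing, and finally the chain of inclusions together with the vanishing criterion.

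\emph{Independence of $\epsilon$.} Writing the Stein factorization as $\alpha=\beta\circ\alpha'$ with $\beta$ finite, the divisor $\beta^*H$ is ample on $Y$, so the previous lemma gives $\kappa(mK_X+\epsilon\alpha^*H)\geq0$ and the asymptotic multiplier ideal $\mathcal{J}(\|mK_X+\epsilon\alpha^*H\|)$ is defined for every rational $\epsilon>0$. Fixing $m$ and a sequence $\epsilon_k\searrow0$, I choose general effective $\QQ$-divisors $D_k\sim_\QQ mK_X+\epsilon_k\alpha^*H$ computing these ideals, so that $\mathcal{J}(\|mK_X+\epsilon_k\alpha^*H\|)=\mathcal{J}(D_k)$; since $\alpha^*H$ is effective, these form a monotone family. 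To invoke Lemma \ref{H} I fix the line bundle $L:=mK_X+\alpha^*H+A$ with $A$ ample; then $L-D_k\sim_\QQ(1-\epsilon_k)\alpha^*H+A$ is ample, hence nef and big, uniformly in $k$. Lemma \ref{H} then forces the family $\{\mathcal{J}(D_k)\}$ to stabilize, which is the asserted independence and makes the definition of $\mathcal{F}_m$ unambiguous.

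\emph{The vanishing.} This is the heart of the argument. For $L$ sufficiently ample on $\hat A$ the sheaf $\hat L$ is locally free, and Mukai's computation \cite{Mu} gives $\phi_L^*\hat L^\vee\cong L^{\oplus h^0(L)}$ under the isogeny $\phi_L\colon\hat A\to A$. As $\phi_L$ is finite and flat, $H^i(A,\mathcal{F}_m\otimes\hat L^\vee)$ is a direct summand of $\bigoplus H^i(\hat A,\phi_L^*\mathcal{F}_m\otimes L)$, so it suffices to prove $H^i(\hat A,\phi_L^*\mathcal{F}_m\otimes L)=0$ for $i>0$. Forming the fibre square $\tilde X=X\times_A\hat A$ with étale projection $g\colon\tilde X\to X$ and induced $\tilde\alpha\colon\tilde X\to\hat A$, flat base change together with the étale invariance of asymptotic multiplier ideals identifies $\phi_L^*\mathcal{F}_m$ with $\tilde\alpha_*(\omega_{\tilde X}^m\otimes\mathcal{J}(\|(m-1)K_{\tilde X}+\epsilon\tilde\alpha^*\phi_L^*H\|))$. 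The remaining statement is a Kollár–Nadel vanishing: on a log resolution $\mu\colon W\to\tilde X$ of the asymptotic ideal one writes $\omega_{\tilde X}^m\otimes\mathcal{J}=\mu_*(\omega_W\otimes N)$, where $N$ carries the nef positive part of $(m-1)K_{\tilde X}+\epsilon\tilde\alpha^*\phi_L^*H$, and Kollár's vanishing theorem for the composite $W\to\hat A$ applies because $L$ is ample on the base $\hat A$. I expect this to be the main obstacle: the pullback $\tilde\alpha^*L$ is only nef and not big on $\tilde X$, so plain Nadel vanishing is unavailable; one must exploit that Kollár's theorem requires ampleness only downstairs, and that the perturbation $\epsilon\alpha^*H$ was inserted precisely to make the relevant positive part nef.

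\emph{Inclusions and the criterion.} The vanishing $H^i(A,\mathcal{F}_m\otimes\hat L^\vee)=0$ for all $i>0$ and all sufficiently ample $L$ is exactly the input of \cite[Corollary 3.2]{Hac}, which yields $V^0(\mathcal{F}_m)\supseteq V^1(\mathcal{F}_m)\supseteq\cdots\supseteq V^n(\mathcal{F}_m)$. Finally, if $V^0(\mathcal{F}_m)=\emptyset$ then the chain forces every $V^i(\mathcal{F}_m)=\emptyset$, so $h^i(\mathcal{F}_m\otimes P)=0$ for all $i$ and all $P\in\hat A$; by cohomology and base change the Fourier–Mukai transform $\mathbf{R}\hat S(\mathcal{F}_m)$ then vanishes, and since $\mathbf{R}\hat S$ is an equivalence we conclude $\mathcal{F}_m=0$.
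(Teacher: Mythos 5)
Your proof follows essentially the same route as the paper's: independence of $\epsilon$ via Lemma \ref{H}, the vanishing via the isogeny $\phi_L$ together with \'etale base change for multiplier ideals and a vanishing theorem on a log resolution, and the final claim via \cite[Corollary 3.2]{Hac} and Mukai's theorem. Your two refinements --- adding an ample $A$ so that $L-D_k$ is genuinely nef \emph{and big} when invoking Lemma \ref{H} (the monotonicity of the $\mathcal{J}(D_k)$ holds because $\alpha^*H$ is semiample, not merely effective), and noting that the last vanishing requires a Koll\'ar-type theorem with ampleness only on the base since $\tilde\alpha^*L$ is not big --- are correct and make explicit what the paper's citation of \cite[Theorem 4.1]{Hac} leaves terse.
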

\begin{proof} The first statement follows from Lemma \ref{H} by taking $L$ to be $mK_X+\alpha^*H$ on $X$. The vanishing of cohomologies follows from \cite[Theorem 4.1]{Hac} with a slight modification and hence we reproduce the argument here. Consider the isogeny $\phi_L:\hat{A}\rightarrow A$ defined by $L$, $\hat{\alpha}:\hat{X}\rightarrow \hat{A}$, and $f:\hat{X}=X\times_A\hat{A}\rightarrow X$. Then as $\phi^{*}_L\hat{L}^{\vee}=\oplus_{h^0(L)}L$, we have
   \begin{align*} H^i(A,\mathcal{F}_m\otimes\hat{L}^{\vee})
                                       &\subseteq H^i(A,\mathcal{F}_m\otimes\hat{L}^{\vee}\otimes{\phi_L}_{*}\mathcal{O}_{\hat{A}})\\
                                       &=H^i(\hat{A},\phi_L^{*}\mathcal{F}_m\otimes\phi^{*}_L\hat{L}^{\vee})\\
                                       &=\oplus H^i(\hat{A},\hat{\alpha}_*f^*(\omega_X^m\otimes\mathcal{J}(\|(m-1)K_X+\epsilon\alpha^{*}H\|))\otimes L) \\
                                       &=\oplus H^i(\hat{A},\hat{\alpha}_*(\omega^m_{\hat{X}}\otimes\mathcal{J}(\|(m-1)K_{\hat{X}}+\epsilon\hat{\alpha}^{*}\phi_L^*H\|))\otimes L),
   \end{align*}
where the last equality is the  ${\rm \acute{e}}$tale base change of multiplier ideal sheaves in \cite[Theorem 11.2.16]{L2}. For $i>0$, the cohomological groups above vanish by Nadel vanishing on $\hat{X}$, or by Kawamata-Viehweg vanishing theorem on a log resolution $\pi:Y\rightarrow\hat{X}$. The final statement follows from \cite[Theorem 2.2]{Mu}.
\end{proof}

\begin{proof}\emph{(of Theorem \ref{NONV})} For general point $z\in Y$ and $m$ sufficiently divisible, we have for the sheaves defined by $\mathcal{F}_m':=\alpha_*'(\omega_X^m\otimes\mathcal{J}(\|(m-1)K_X+\epsilon\alpha^{*}H\|))$ on $Y$:
   \begin{align*} (\mathcal{F}_m')_z &= H^0(F,\omega_F^m\otimes\mathcal{J}(\|(m-1)K_X+\epsilon\alpha^{*}H\|)|_F) \\
                                    &\supseteq H^0(F,\omega_F^m\otimes\mathcal{J}(\|(m-1)K_X+\epsilon\alpha^{*}H\|_F)) \\
                                    &= H^0(F,\omega_F^m\otimes\mathcal{J}(\|(m-1)K_F\|)) \\
                                    &\supseteq H^0(F,\omega_F^m\otimes\mathcal{J}(\|mK_F\|)) \\
                                    &= H^0(F,\omega_F^m)>0.
   \end{align*}
The first inclusion follows from the properties of the restriction of multiplier ideal sheaves in \cite[Theorem 11.2.1]{L2}, the second equality from the explanation of semipositivity in \cite[Proposition 10.2]{K}, and the last inequality from $\kappa(F)\geq0$. Hence $\mathcal{F}_m'$ is non-trivial. In particular, $\mathcal{F}_m$ is also non-trivial for $m$ sufficiently divisible.

\smallskip

For $m$ sufficiently divisible, $\mathcal{F}_m\neq0$ and hence $V^0(\mathcal{F}_m)\neq\phi$ by Lemma \ref{G}. This shows that we can find an element $P\in {\rm Pic}^0(X)$ with $H^0(X,\omega_X^m\otimes P)\neq 0$. Following the argument of \cite[Theorem 3.2]{CH} (cf. Theorem \ref{T}), $V_m(K_X)$ is a union of torsion translates of subvarieties in ${\rm Pic}^0(X)$ for $m\geq 1$ and in particular we can find an element $P'\in {\rm Pic}^0(X)_{\rm tor}$ with $H^0(X,\omega_X^m\otimes P')\neq 0$. Then $H^0(X,\omega_X^{md})\neq 0$ for $d={\rm ord}(P')$ in ${\rm Pic}^0(X)$ and hence $\kappa(X)\geq 0$.
\end{proof}

\begin{thm}\label{T} Let $X$ be a smooth projective variety. Then the cohomological loci
\begin{align*} V_m(K_X):=\{P\in {\rm Pic}^0(X)|\ h^0(X,\omega_X^{\otimes m}\otimes P)>0\}
\end{align*}
for $m$ a positive integer, if non-empty, is a finite union of torsion translates of abelian subvarieties of ${\rm Pic}^0(X)$.
\end{thm}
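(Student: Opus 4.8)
The plan is to transport the problem to the Albanese variety $A$, where the structure theory of cohomological support loci is available, using the generic vanishing sheaf $\mathcal{F}_m$ from Lemma \ref{G} as the bridge. Since the Albanese map induces an isomorphism $\alpha^*\colon{\rm Pic}^0(A)=\hat{A}\xrightarrow{\sim}{\rm Pic}^0(X)$, I identify the two groups and regard each $P\in{\rm Pic}^0(X)$ as pulled back from $A$. First I would compare $V_m(K_X)$ with $V^0(\mathcal{F}_m)$. By the projection formula, for such $P$,
\begin{align*}
H^0(A,\mathcal{F}_m\otimes P)=H^0\!\big(X,\omega_X^m\otimes\mathcal{J}(\|(m-1)K_X+\epsilon\alpha^*H\|)\otimes\alpha^*P\big),
\end{align*}
and since the multiplier ideal is a subsheaf of $\mathcal{O}_X$ this already gives $V^0(\mathcal{F}_m)\subseteq V_m(K_X)$. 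The reverse inclusion is the delicate point: it amounts to showing that the asymptotic multiplier ideal does not destroy the pluricanonical sections controlling $V_m(K_X)$, which I would extract from the restriction and subadditivity properties of multiplier ideals already exploited in the computation in the proof of Theorem \ref{NONV}, together with an induction on $\dim X$ through the Stein factorization of $\alpha$ (restricting to a general fiber $F$, where $K_X|_F=K_F$). The upshot is that $V_m(K_X)$ agrees, away from lower-dimensional pieces, with a finite collection of loci of the form $V^0(\mathcal{F})$ for generic vanishing sheaves $\mathcal{F}$ on $A$.

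Second, I would use the generic vanishing property itself. The vanishing $H^i(A,\mathcal{F}_m\otimes\hat{L}^\vee)=0$ for $i>0$ proved in Lemma \ref{G} says precisely that $\mathcal{F}_m$ is a $\mathrm{GV}$-sheaf in the sense of \cite{Hac}; its Fourier--Mukai transform is then well behaved, and the structure theorem for the cohomological support loci of such sheaves (\cite{GL1}, \cite{GL2}, \cite{Hac}) shows that $V^0(\mathcal{F}_m)$ is a finite union of translates $P_0+\hat{B}$ of abelian subvarieties $\hat{B}\subseteq\hat{A}$. Finiteness of the components is automatic, since upper semicontinuity of cohomology makes $V_m(K_X)$ a closed subset, hence a finite union of its irreducible components.

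The step I expect to be the main obstacle is upgrading each translate $P_0+\hat{B}$ to a \emph{torsion} translate. Here I would run Simpson's arithmetic descent: after spreading out, $X$, the map $\alpha$, and all the data defining $\mathcal{F}_m$ are defined over a field finitely generated over $\overline{\QQ}$, so the finitely many components of the support locus are defined over $\overline{\QQ}$ and form a Galois-stable family; a translate of a subtorus lying in such a finite Galois orbit must meet the torsion points in a Zariski-dense set, and a subtorus translate containing a torsion point is a torsion translate. The genuine difficulty is that for $m\geq2$ the locus $V_m(K_X)$ is not one of the rank-one (canonical) loci to which Simpson's theorem applies directly; to bypass this I would carry out the descent through the sheaves $\mathcal{F}_m$ and their pullbacks under the isogenies $\phi_L\colon\hat{A}\to A$ of Lemma \ref{G}, reducing the torsion assertion to the $m=1$ case on suitable abelian \'etale covers, where the results of \cite{GL1}, \cite{GL2}, and \cite{CH} close the argument. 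Making this reduction precise, and reconciling it with the multiplier-ideal comparison of the first step, is where I expect the real work to lie.
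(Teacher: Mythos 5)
Your route through the Albanese variety and the sheaves $\mathcal{F}_m$ of Lemma \ref{G} is genuinely different from the paper's, but it has two gaps that I do not see how to close. First, the comparison $V_m(K_X)=V^0(\mathcal{F}_m)$: you only obtain $V^0(\mathcal{F}_m)\subseteq V_m(K_X)$ and defer the reverse inclusion to unspecified properties of asymptotic multiplier ideals. The ideal $\mathcal{J}(\|(m-1)K_X+\epsilon\alpha^*H\|)$ is built from the linear series of $K_X$ itself, not of $K_X+\frac{1}{m}P$, so there is no reason it preserves the sections of $\omega_X^m\otimes P$; and even an identification ``away from lower-dimensional pieces'' would not suffice, since the theorem must account for isolated points of $V_m(K_X)$, which are exactly the components for which torsionness is the whole content. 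Second, and more fundamentally, the vanishing $H^i(A,\mathcal{F}_m\otimes\hat{L}^{\vee})=0$ established in Lemma \ref{G} yields only the chain of inclusions $V^0(\mathcal{F}_m)\supseteq V^1(\mathcal{F}_m)\supseteq\cdots$ and codimension estimates; it does \emph{not} imply that the components of $V^0(\mathcal{F}_m)$ are translates of abelian subvarieties. That linearity is a Hodge-theoretic statement special to sheaves of the form $R^i a_*\omega_Y$ for a morphism $a$ to an abelian variety (Green--Lazarsfeld), and the torsion refinement is Simpson's theorem, again only for such canonical-type loci; for $m\geq 2$ the sheaf $\mathcal{F}_m$ is not of this form, so there is no structure theorem to invoke off the shelf. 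Your proposed arithmetic descent through the isogenies $\phi_L$ is not worked out and, as you yourself note, is where the real difficulty sits. Finally, closedness of $V_m(K_X)$ gives finitely many irreducible components, but not that each component is itself a torsion translate of a subtorus, so finiteness is not ``automatic.''

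The paper avoids both problems by a different reduction to the $m=1$ case. Given $\tilde{P}\in V_m(K_X)$, write $\tilde{P}=mP$ in the divisible group ${\rm Pic}^0(X)$, take a log resolution $\mu:X'\rightarrow X$ of $|m(K_X+P)|$ with $D\in\mu^*|m(K_X+P)|$ of simple normal crossing support, and set $N=\mu^*\mathcal{O}_X((m-1)(K_X+P))\otimes\mathcal{O}_{X'}(-\lfloor\frac{m-1}{m}D\rfloor)$. The adjoint locus $V^0(\omega_{X'}\otimes N)$ is a union of torsion translates of abelian subvarieties by \cite[Theorem 8.3]{CH2} and \cite{Sim} --- this is where the rank-one, canonical-bundle structure is genuinely present, via the associated cyclic cover --- it contains $P$, and pushing forward together with a short computation in ${\rm Pic}^0(X)$ places $\tilde{P}$ in a torsion translate of the same subtorus inside $V_m(K_X)$. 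Finiteness is then obtained by applying Ueno's fibration theorem \cite[Theorem 10.9]{U} to each irreducible component, showing that component is itself a torsion translate. If you want to salvage your approach, the missing ingredient is precisely a substitute for this cyclic-cover construction that puts $V_m(K_X)$ itself, rather than $V^0(\mathcal{F}_m)$, within reach of Simpson's theorem.
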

\begin{proof} If $m=1$, then by a result of Simpson in \cite{Sim} the loci $V_1(K_X)$ is a union of torsion translates of abelian subvarieties of ${\rm Pic}^0(X)$. In general, let $\tilde{P}\in V_m(K_X)$ and write $\tilde{P}=mP$ for some $P\in{\rm Pic}^0(X)$. Let $\mu:X'\rightarrow X$ be a log resolution of $|m(K_X+P)|$, and $D\in\mu^*|m(K_X+P)|$ be a divisor with simple normal crossing support. Consider the line bundle $N:=\mu^*\mathcal{O}_{X}((m-1)(K_X+P))\otimes\mathcal{O}_{X'}(-\lfloor\frac{m-1}{m}D\rfloor)$, then it follows from \cite[Theorem 8.3]{CH2} and \cite{Sim} that the cohomological loci
\begin{align*} V^0(\omega_{X'}\otimes N):=\{R\in{\rm Pic}^0(X')| h^0(\omega_{X'}\otimes R)>0\}
\end{align*}
is a union of torsion translates of abelian subvarieties of ${\rm Pic}^0(X')$. Note that ${\rm Pic}^0(X')\cong{\rm Pic}^0(X)$ as $X$ is smooth, and hence we may identify the elements in these two groups (via pulling back by $\mu$). It is easy to see that $P\in V^0(\omega_{X'}\otimes N)$, and hence there exists an abelian subvariety $T\subseteq{\Pic}^0(X)$ and a torsion element $Q\in{\rm Pic}^0(X)_{\rm tor}$ such that
\begin{align*} P\in T+Q\subseteq V^0(\omega_{X'}\otimes N).
\end{align*}
By pushing forward, it is also easy to see that
\begin{align*} T+Q+(m-1)P\subseteq V_m(K_X).
\end{align*}
Now since $rP\in rT$ for some positive integer $r$ and $rT$ is a group, we have that $r(m-1)P\in rT$ and hence $(m-1)P\in T+Q'$ for some torsion element $Q'\in{\rm Pic}^0(X)_{\rm tor}$. In particular, we have
\begin{align*} \tilde{P}=mP\in T+Q+(m-1)P=T+Q+Q' \subseteq V_m(K_X),
\end{align*}
and hence $V_m(K_X)$ is a union of torsion translates of abelian subvarieties of ${\rm Pic}^0(X)$. 

\smallskip

Let $V$ be an irreducible component of $V_m(K_X)$ and denote ${\rm Pic}^0(X)$ by $A$. Note that for any general point of $V$, there is a torsion translate of an abelian subvariety of $A$  contained in $V$ passing through it. It is well-known that if $V$ is of general type, then there are no nontrivial abelian subvarieties of $A$ contained in $V$ passing through general points of $V$. In this case, a general point of $V$ must be torsion and hence $\dim V$ can only be zero since there are only countably many torsion points in $A$. It follows that $V$ is a torsion point. If $V$ is not of general type, then by \cite[Theorem 10.9]{U} there is an algebraic fiber space $f:V\rightarrow B$ with general fiber $A_1$ induced by $\pi:A\rightarrow A/A_1$, where $A_1$ is an abelian subvariety of $A$ and $B\subseteq A/A_1$ is a subvariety of general type. Since there are also torsion translate of abelian subvarieties of $A/A_1$ contained in $B$ passing through general points of $B$, $B$ is a torsion point and then $V$ is a torsion translate of an abelian subvariety of $A$. Hence we conclude that the algebraic set $V_m(K_X)$, if non-empty, is a \emph{finite} union of torsion translates of abelian subvarieties of ${\rm Pic}^0(X)$. 
\end{proof}

\section{Main Theorems}

We will now establish the existence of good minimal models in several different situations.

\subsection{Kodaira Dimension $\kappa(X)=0$}

\begin{lem}\label{Exc} Let $X$ be a smooth projective variety with $\kappa(X)=0$ and $\alpha:X\rightarrow A:=Alb(X)$ be the Albanese morphism. Suppose that $|mK_X|\neq\phi$ for some $m>0$ and $F$ is the unique effective divisor in $|mK_X|$, then ${\rm Supp}(F)$ contains all $\alpha$-exceptional divisors.
\end{lem}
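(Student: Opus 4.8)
Since $\kappa(X)=0$ we have $h^0(X,\omega_X^m)\leq 1$ for every $m$, so the hypothesis $|mK_X|\neq\phi$ forces $h^0(X,\omega_X^m)=1$ and exhibits $F$ as the unique member; consequently $\operatorname{Supp}(F)=\mathbf{B}(K_X)$, the stable base locus. Thus it suffices to prove that every $\alpha$-exceptional prime divisor $E$ lies in $\mathbf{B}(K_X)$. The plan is to argue by contradiction, assuming $E\not\subseteq\operatorname{Supp}(F)$; equivalently, the unique section $s\in H^0(X,\omega_X^m)$ with $\operatorname{div}(s)=F$ restricts to a nonzero section on $E$.

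First I would pass to the Stein factorization $\alpha':X\to Y$ of $X\to\alpha(X)$, which is an algebraic fibre space, and set $Z:=\alpha(E)$, so that $\operatorname{codim}Z\geq2$ and $E$ is contracted by $\alpha'$. Then $E$ is a degenerate divisor for $\alpha'$, and Lemma \ref{CONT} provides a covering family of curves $C\subseteq E$, contracted by $\alpha$, with $E\cdot C<0$; in particular $-E$ is relatively big on $E$ over $Z$, and $E\subseteq\mathbf{B}_{-}(E/Y)$. The goal is then to convert this relative negativity of $E$ into the vanishing of $s$ along $E$.

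The global input I would exploit is the triviality $K_A\sim 0$, which gives $\omega_X^m\cong\omega_{X/A}^m$. By weak positivity of $\alpha_*\omega_{X/A}^m$ together with subadditivity of the Kodaira dimension along the Albanese fibration (so that the general fibre has $\kappa=0$ and the sheaf has rank one), $\mathcal{F}:=\alpha_*\omega_X^m$ is a rank-one torsion-free sheaf on $A$ with $h^0(\mathcal{F})=1$; write $\mathcal{F}=L\otimes\mathcal{I}_W$ with $L$ a line bundle and $\operatorname{codim}W\geq2$. I would then show the inclusion $Z\subseteq W$: over the codimension-$\geq 2$ locus $Z$ the fibre of $\alpha$ acquires the extra divisorial component $E$, on which, by the relative bigness of $-E$ furnished by Lemma \ref{CONT}, the relative $m$-canonical forms are forced to vanish, so that $\mathcal{F}$ drops into $\mathcal{I}_Z\otimes L$ near $Z$. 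Granting $Z\subseteq W$, the unique section $s\in H^0(A,L\otimes\mathcal{I}_W)$ vanishes along $W$, hence along $\alpha^{-1}(Z)\supseteq E$, contradicting $E\not\subseteq\operatorname{Supp}(F)$.

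The hard part will be precisely the inclusion $Z\subseteq W$, i.e.\ localizing the degeneracy of $\alpha_*\omega_X^m$ along the image of the exceptional divisor and thereby forcing the unique pluricanonical section to vanish along $E$. A naive curve-by-curve estimate is not enough: adjunction on $E$ only gives $K_X\cdot C=K_{E_w}\cdot C-E\cdot C$ for $C$ in a fibre $E_w$ of $E\to Z$, and the right-hand side may be nonnegative, so $E$ need not lie in $\mathbf{B}_{-}(K_X)$ and cannot be detected by intersection numbers alone. The resolution must therefore lean on the global rigidity $h^0(X,\omega_X^m)=1$ and the triviality $K_A\sim 0$, rather than on any relative negativity of $K_X$ itself, which is what makes this the crux of the argument.
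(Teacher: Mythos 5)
Your opening reductions are fine as far as they go, but the proof has a genuine gap at exactly the point you flag, and the mechanism you propose for closing it cannot work. The inclusion $Z\subseteq W$ is in fact false: since $\kappa(X)=0$, the effective line bundle $L=\mathcal{F}^{\vee\vee}$ on the abelian variety $A$ must be trivial (a nontrivial effective divisor on an abelian variety is nef of positive Iitaka dimension, and its pullback is dominated by $F$, which would give $\kappa(X)\geq 1$), and then $h^0(A,\mathcal{I}_W)=1$ forces $W=\emptyset$, whereas $Z=\alpha(E)\neq\emptyset$. The underlying problem is structural: pushing forward to $A$ discards precisely the information you need, because an $\alpha$-exceptional divisor does not dominate any divisor of $A$, so the behaviour of the unique section $s$ along $E$ is invisible in $\alpha_*\omega_X^m$ (which is simply $\mathcal{O}_A$ here). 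Likewise the relative negativity $E\cdot C<0$ supplied by Lemma \ref{CONT} concerns the divisor $E$ itself, not $K_X$, and, as you yourself observe, it cannot force $E\subseteq\mathbf{B}(K_X)$. So after the correct observations that ${\rm Supp}(F)=\mathbf{B}(K_X)$ and that one must place every $\alpha$-exceptional divisor inside it, no actual argument remains.

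The paper's proof rests on an entirely different, generic-vanishing input from Ein--Lazarsfeld \cite{EL}. When $P_1(X)=P_2(X)=1$, the origin is an isolated point of the locus $V_0(X)$, and by \cite[Corollary 1.5]{EL} this makes the wedge map $\phi_\eta\colon H^0(X,\Omega_X^{n-1})\to H^0(X,\Omega_X^{n})$ surjective for every nonzero holomorphic $1$-form $\eta$. At a general point $e$ of an $\alpha$-exceptional divisor the codifferential of $\alpha$ drops rank, so some pulled-back flat $1$-form $\eta_e$ degenerates at $e$, and the surjectivity of $\phi_{\eta_e}$ forces the pluricanonical divisor to pass through $e$ (cf.\ \cite[Proposition 2.2]{EL}); since $e$ is general in $E$, this gives $E\subset F$. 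The general case is reduced to $P_1=P_2=1$ by passing to a log resolution and taking the degree-$m$ cyclic cover branched along $F'\in|mK_{X'}|$, for which $\kappa=0$, $P_1=P_2=1$, and the generic vanishing statements still hold; one then pushes the resulting divisor back down. If you want to salvage your outline, this cohomological step (isolatedness of the origin in $V_0$ plus surjectivity of cup product with $1$-forms, together with the covering trick) is the missing ingredient that must replace your inclusion $Z\subseteq W$.
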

\begin{proof} Suppose that we have $P_1(X)=P_2(X)=1$, where $P_n(X):=h^0(X,\omega_X^n)$ is the $n$-th plurigenus. Then by \cite[Proposition 2.1]{EL}, the origin of ${\rm Pic}^0(X)$ is an isolated point of the cohomological support loci
\begin{align*} V_0(X):=\{y\in {\rm Pic}^0(X)|h^0(X,\omega_X\otimes P_y)\neq 0\}.
\end{align*}
By \cite[Corollary 1.5]{EL}, this implies that for every non-zero $\eta\in H^0(X,\Omega_X^1)$, the map
\begin{align*} \phi_{\eta}:H^0(X,\Omega_X^{n-1})\stackrel{\wedge\eta}{\rightarrow} H^0(X,\Omega_X^n)
\end{align*}
determined by wedging with $\eta$ is surjective. Now for a given $\alpha$-exceptional divisor $E$, the differential $d\alpha:T_eX\rightarrow T_{\alpha(e)}A$ is not of full rank at a general point $e\in E$. In particular, if $\eta_e$ is the non-zero $1$-form given by pulling back a flat $1$-form on $A$ which is in $\ker[(d\alpha)^{\vee}:T^*_{\alpha(e)}A\rightarrow T^*_eX]$, then the surjectivity of $\phi_{\eta_e}$ shows that $F$ must pass through $e$ (cf. \cite[Proposition 2.2]{EL}). Since $e\in E$ is a general point, we have $E\subset F$.

\smallskip

For the general case, let $\mu:X'\rightarrow X$ be a log resolution of $(X,F)$. Then we may write $mK_{X'}=\mu^*(mK_X)+E\sim \mu^*F+E=:F'\geq 0$ where $E$ is effective and consists of $\mu$-exceptional divisors, ${\rm Supp}(F')$ is a simple normal crossing divisor, and $\mu_*F'=F$. We take a cyclic cover $f:Y\rightarrow X'$ defined by $F'\in |mK_{X'}|$ followed by a resolution $d:Y'\rightarrow Y$. It is well-known that $Y$ is normal with only quotient singularities and for $f':=f\circ d$ we have
\begin{align*} f'_*\mathcal{O}_{Y'}=\bigoplus^{m-1}_{i=0}(\omega^i_{X'}(-\lfloor\frac{i}{m}F'\rfloor))^\vee \end{align*} 
and
\begin{align*} f'_*\omega_{Y'}=\omega_{X'}\otimes(\bigoplus^{m-1}_{i=0}\omega^i_{X'}(-\lfloor\frac{i}{m}F'\rfloor)).\end{align*}
An easy computation then shows that $\kappa(Y')=0$ and $P_1(Y')=P_2(Y')=1$. As the generic vanishing result still holds for the induced map $\alpha':Y'\rightarrow A$ (cf. \cite[Remark 1.6]{EL}), the argument above then shows that for $m>0$ sufficiently divisible and the unique effective divisor $\Gamma\in|mK_{Y'}|$, ${\rm Supp}(\Gamma)$ contains all $\alpha'$-exceptional divisors. It is then easy to see that ${\rm Supp}(F')={\rm Supp}(f'(\Gamma))$ and hence the lemma now follows by pushing forward $\Gamma$ to $X$.
\end{proof}

\begin{thm}\label{K=0} Let $X$ be a normal projective $\QQ$-factorial variety with at most terminal singularities and $\kappa(X)=0$. Suppose the general fiber $F$ of the Albanese morphism has a good minimal model, then $X$ has a good minimal model.
\end{thm}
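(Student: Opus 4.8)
The plan is to reduce to the existence of a good minimal model for a birational model whose relative canonical class is trivial on the general fiber, and then to exploit the fact that the base is (finite over) a subvariety of an abelian variety together with the hypothesis $\kappa(X)=0$.

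First I would set up the fibration and reduce. Let $\alpha:X\to A=\mathrm{Alb}(X)$ be the Albanese morphism and let $\alpha':X\to Y$ be the Stein factorization of $X\to\alpha(X)\subseteq A$, an algebraic fiber space with general fiber $F$; here $Y$ is finite over the subvariety $\alpha(X)$ of $A$, so $\kappa(Y)\ge0$. Since $F$ has a good minimal model, its minimal model has semiample canonical class and thus $\kappa(F)\ge0$; on the other hand $C_{n,m}$ holds for $\alpha'$ by \cite{Ka1} (the general fiber has a good minimal model), giving $0=\kappa(X)\ge\kappa(F)+\kappa(Y)\ge\kappa(F)$, whence $\kappa(F)=0$. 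Applying Proposition \ref{AFS} to $\alpha'$ yields a birational contraction $\phi:X\dashrightarrow X'$ over $Y$ (a $K_X$-minimal model program over $Y$) such that the general fiber $F'$ of $g:X'\to Y$ is a good minimal model; as $\kappa(F')=0$ with $K_{F'}$ semiample, we get $K_{F'}\sim_{\QQ}0$. Since $\phi$ is $(K_X)$-negative and extracts no divisors, Lemma \ref{MM} reduces us to producing a good minimal model of $X'$.

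Next I would analyze the fixed divisor. Because $\kappa(X')=\kappa(X)=0$, for $m$ sufficiently divisible $|mK_{X'}|$ consists of a single effective divisor $D$. Restricting to a general fiber, $D|_{F'}$ is the unique member of $|mK_{F'}|$, which is $0$ since $K_{F'}\sim_{\QQ}0$; hence $D$ has no horizontal component and is vertical over $Y$ (consistently, Lemma \ref{Exc} shows $\mathrm{Supp}(D)$ contains all $\alpha$-exceptional divisors, which are vertical). I would then run a $K_{X'}$-minimal model program over $Y$ with scaling of an ample divisor. The steps nontrivial on the general fiber are finite by Corollary \ref{AB}, after which the general fiber stays a good minimal model; the remaining steps are supported over proper closed subsets of $Y$, and whenever the vertical divisor still has a degenerate component, Lemma \ref{CONT} produces such a component inside $\mathbf{B}_{-}(mK_{X'}/Y)=\mathbf{B}_{-}(K_{X'}/Y)$, covered by $g$-contracted curves meeting $K_{X'}$ negatively, which the program flips or contracts. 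The aim is to terminate at some $g'':X''\to Y$ with $mK_{X''}\sim D''$, where $D''$ is vertical with no degenerate components; with $K_{X''}\equiv_Y0$ and relative abundance, $K_{X''}$ is relatively semiample, so $rK_{X''}\sim (g'')^{*}M$ for some $\QQ$-Cartier divisor $M$ on $Y$. As $\kappa(X'')=\kappa(X)=0$ forces $\kappa(Y,M)=0$, and $Y$ is finite over a subvariety of the abelian variety $A$, the class $M$ descends to a numerically trivial divisor on $A$; since an effective numerically trivial divisor on an abelian variety is zero, $M$ is torsion and $K_{X''}\sim_{\QQ}0$. Then $X''$ is a good minimal model of $X'$, hence of $X$ by Lemma \ref{MM}.

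The hard part will be the last stage: showing that the program over $Y$ terminates and that relative abundance makes $K_{X''}\equiv_Y0$ descend to a genuine class on $A$, and then ruling out a persisting nonzero vertical divisor. The delicate case is when $D''$ is of ``sufficient fiber type'', i.e. a combination of full fibers over divisors of $Y$ with possibly non-uniform coefficients, where Lemma \ref{CONT} no longer applies; here one must argue that the coefficients are forced to descend (so that $D''$ is $\QQ$-linearly a pullback) and then invoke the abelian-variety vanishing to conclude $D''=0$. This interplay between the relative structure over $Y$ and the global geometry of the abelian variety $A$, together with the termination needed to reach $X''$, is where the essential work of the proof lies.
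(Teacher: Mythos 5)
Your first half follows the paper closely: reduce to $X$ smooth, get $\kappa(F)=0$ from $C_{n,m}$, apply Proposition \ref{AFS} to make the general fiber a good minimal model with $K_{F'}\sim_{\QQ}0$, conclude the unique member $\Gamma$ of $|mK_{X'}|$ is vertical, and use Lemma \ref{CONT} to kill the insufficient-fiber-type case against $\mathbf{B}_{-}(K_{X'}/A)$ having no divisorial components. But the decisive step --- excluding the case where $\Gamma$ contains \emph{all} divisors dominating some codimension-one point $P$ of the base --- is exactly what you defer to ``the hard part,'' and the sketch you give for it does not work as stated. You miss the two ingredients the paper uses to close this case. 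First, by Kawamata's characterization of abelian varieties (\cite{Ka}), $\kappa(X)=0$ forces the Albanese map itself to be an algebraic fiber space, so the base is the abelian variety $A$ --- not merely a variety finite over a subvariety $\alpha(X)\subseteq A$ as in your setup. Second, with the base equal to $A$, the paper argues directly: if $\mathrm{Supp}((\alpha')^{-1}(P))\subseteq\mathrm{Supp}(\Gamma)$ (the exceptional components of the fiber being supplied by Lemma \ref{Exc}, which you do not invoke for this purpose), then $\Gamma\geq\epsilon(\alpha')^{*}P$ for small $\epsilon>0$, whence
\begin{align*} 0=\kappa(X)\geq\kappa(\mathcal{O}_{X'}(\Gamma))\geq\kappa(\mathcal{O}_A(P))>0,
\end{align*}
a contradiction, since a nonzero effective divisor on an abelian variety has positive Iitaka dimension. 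No second MMP over the base and no relative abundance are needed.

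Your proposed alternative for this case --- terminate a further MMP at $X''$ with $K_{X''}\equiv_Y 0$, invoke relative abundance to write $rK_{X''}\sim(g'')^{*}M$, and then ``descend'' $M$ to a numerically trivial divisor on $A$ --- contains unjustified steps: relative semiampleness of $K_{X''}$ over $Y$ is not available for free (it is essentially what one is trying to prove), and the descent of $M$ from a finite cover of a subvariety of $A$ to a numerically trivial class on $A$ is asserted rather than argued. Since you explicitly flag this as where ``the essential work of the proof lies'' without carrying it out, the proposal has a genuine gap precisely at the crux of the theorem.
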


\begin{proof} By Lemma \ref{TM}, we may assume that $X$ is smooth. By \cite{Ka}, the Albanese map $\alpha:=alb_X:X\rightarrow A:=Alb(X)$ is an algebraic fiber space. Moreover we have $\kappa(F)=0$ as $C_{n,m}$ holds by \cite{Ka1}.

By Proposition \ref{AFS}, after running a minimal model program of $X$ with scaling of an ample divisor over $A$, we have a birational map $X\dashrightarrow X'$ over $A$ such that the general fiber $F'$ of $\alpha':X'\rightarrow A$ is a good minimal model. Moreover we may assume that ${\bf B}_{-}(K_{X'}/A)$ contains no divisorial components. Note that then $\kappa(F')=0$ implies $K_{F'}\sim_\QQ 0$. For $K_{X'}\sim_\QQ\Gamma$ with $\Gamma$ effective, we write $\Gamma=\Gamma_{\rm hor}+\Gamma_{\rm ver}$. Then as $\Gamma_{\rm hor}|_{F'}=\Gamma|_{F'}\sim_\QQ K_{X'}|_{F'}\sim K_{F'}\sim_\QQ 0$, we have $\Gamma_{\rm hor}=0$. Suppose there exists an effective divisor $E\leq\Gamma$ with $P:=\alpha'_*(E)_{\rm red}$ a codimension one point and $E$ contains all divisors on $X'$ dominating $P$. Then we have ${\rm Supp}((\alpha')^{-1}(P))\subseteq {\rm Supp}(\Gamma)$ (note that $(\alpha')^{-1}(P)$ may have some exceptional divisorial components which are automatically contained in ${\rm Supp}(\Gamma)$ by Lemma \ref{Exc}). This implies that
  \begin{align*} 0=\kappa(X)=\kappa(X')\geq\kappa(\mathcal{O}_{X'}(\Gamma))\geq\kappa(\mathcal{O}_A(P))>0,
  \end{align*}
a contradiction. Hence $\Gamma$ is of insufficient fiber type. By Lemma \ref{CONT}, we can find a component $D$ of $\Gamma$ such that $D\subseteq\mathbf{B}_{-}(K_{X'}/A)$. But this is impossible as ${\bf B}_{-}(K_{X'}/A)$ contains no divisorial components. Hence $K_{X'}\sim_\QQ0$ and $X'$ is a good minimal model of $X$.
\end{proof}

\begin{cor}\label{K=0'} Let $X$ be a projective variety with terminal singularities and $\kappa(X)=0$. Let $V$ be a smooth projective variety of maximal Albanese dimension and $\alpha:X\rightarrow V$ be an algebraic fiber space. If the general fiber $F$ of $\alpha$ has a good minimal model, then $X$ has a good minimal model.
\end{cor}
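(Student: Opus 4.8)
The plan is to deduce the corollary from Theorem \ref{K=0} by verifying its hypothesis for $X$: namely, that the general fiber of the Albanese morphism $alb_X\colon X\to Alb(X)$ has a good minimal model. By Lemma \ref{TM} we may assume $X$ is smooth (the general fiber of $\alpha$ on a resolution is a resolution of $F$ and still has a good minimal model). First I record that $\kappa(F)=0$: since $F$ has a good minimal model, $C_{n,m}$ holds for $\alpha$ by \cite{Ka1}, so $0=\kappa(X)\geq\kappa(F)+\kappa(V)$; as $V$ is of maximal Albanese dimension we have $\kappa(V)\geq0$ (e.g.\ by \cite{OF}), while $\kappa(F)\geq0$ because $F$ has a good minimal model, forcing $\kappa(F)=0$. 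Thus a good minimal model $F'$ of $F$ satisfies $K_{F'}\sim_\QQ0$.

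The geometric heart is to identify the general fiber of $alb_X$. The universal property of the Albanese gives a morphism $b\colon Alb(X)\to Alb(V)$ with $b\circ alb_X=alb_V\circ\alpha$. Let $X\xrightarrow{s}Y\xrightarrow{t}Alb(X)$ be the Stein factorization of $alb_X$, with $G$ the general (connected) fiber of $s$, and let $\bar\alpha\colon Y\to V$ be the induced surjection with $\alpha=\bar\alpha\circ s$. Since $V$ is of maximal Albanese dimension, $alb_V$ is generically finite, so $alb_V(\alpha(G))=b(t(s(G)))$ is a single point; as $\alpha(G)$ is connected it is a point $v\in V$, which is general when $y$ is general. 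Hence $G$ is contained in a general fiber $F=\alpha^{-1}(v)$, and because $t$ is finite, $G$ is in fact a general fiber of the restricted morphism $\psi:=alb_X|_F\colon F\to Alb(X)$, which factors as $\psi=\iota\circ alb_F$ through the Albanese of $F$.

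Next I transfer the good minimal model from $F$ to $G$. Since $F'$ is klt it has rational singularities, so its Albanese $alb_{F'}\colon F'\to Alb(F')=Alb(F)$ is a morphism and $\psi':=\iota\circ alb_{F'}\colon F'\to Alb(X)$ descends $\psi$; the general fiber $G'$ of $\psi'$ is then birational to $G$. Because $\psi'$ maps to an abelian variety, a general fiber has trivial normal bundle, so adjunction gives $K_{G'}=K_{F'}|_{G'}\sim_\QQ0$, and $G'$ is klt as a general fiber of a morphism from a klt variety. A klt variety with $K\sim_\QQ0$ is its own good minimal model after a small $\QQ$-factorialization, so $G'$—and hence $G$, being birational to $G'$—has a good minimal model. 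Applying Theorem \ref{K=0} to $X$ then produces a good minimal model of $X$.

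The main obstacle is the middle step: making rigorous that the general fiber of the Albanese of $X$ is, up to birational equivalence, a fiber of a morphism from the \emph{minimal model} $F'$ to an abelian variety. This rests on the compatibility $b\circ alb_X=alb_V\circ\alpha$ and the factorization of $\psi$ through $alb_F$, on descending this morphism to the singular model $F'$ via $alb_{F'}$ (using that klt singularities are rational, so the Albanese is a genuine morphism), and on the adjunction computation $K_{G'}\sim_\QQ0$. I also need to justify that ``having a good minimal model'' passes between the birational varieties $G$ and $G'$, which follows from the earlier lemmas together with $\kappa(G)=0$.
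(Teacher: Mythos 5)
Your route is genuinely different from the paper's, and most of it is sound: identifying the general fiber $G$ of $alb_X$ as (a component of) a general fiber of $\iota\circ alb_F\colon F\to Alb(X)$, descending this morphism to the minimal model $F'$ via rational singularities, and the adjunction $K_{G'}=K_{F'}|_{G'}\sim_\QQ 0$ all work. The gap is the final transfer, where you assert that $G$, being birational to the klt variety $G'$ with $K_{G'}\sim_\QQ 0$, has a good minimal model, and that this ``follows from the earlier lemmas together with $\kappa(G)=0$.'' It does not: Lemma \ref{MM} takes as \emph{hypotheses} that the birational map extracts no divisors and is $K$-negative, and these are precisely the two conditions of Definition \ref{M} that are in doubt for $G\dashrightarrow G''$ (with $G''$ a $\QQ$-factorialization of $G'$); Lemma \ref{TM} concerns resolutions of terminal pairs and also does not apply. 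With only ``klt'' the claim can genuinely fail in the sense of Definition \ref{M}: a divisor on $G$ contracted by $G\dashrightarrow G''$ could have negative discrepancy over $G''$, violating negativity, and this strict notion is what Theorem \ref{K=0} consumes (through Propositions \ref{AFS}, \ref{TER}). The repair is available but must be carried out: $F'$ is in fact \emph{terminal} (it is a minimal model of the smooth $F$), hence so are $G'$ and $G''$; writing $K_G\sim_\QQ D\geq0$ with $D$ the unique effective member (as $\kappa(G)=0$) and comparing $K_{\tilde W}=p^*K_G+E_p=q^*K_{G''}+E_q$ on a common resolution, uniqueness of the effective divisor in $|mK_{\tilde W}|$ forces $E_q=p^*D+E_p$, from which one reads off that every $p$-exceptional divisor is $q$-exceptional (no divisors extracted) and that $p_*(p^*K_G-q^*K_{G''})=D$ is effective with support containing every contracted divisor with positive coefficient (negativity, using terminality of $G''$). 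Without some such argument the appeal to Theorem \ref{K=0} is not justified.

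For comparison, the paper's proof avoids the fiber analysis entirely: by $C_{n,m}$ for fibrations whose general fiber has a good minimal model \cite{Ka1}, $\kappa(V)=0$, so by Kawamata's characterization \cite{Ka} the maximal Albanese dimensional $V$ is birational to the abelian variety $Alb(V)$; replacing $V$ by $Alb(V)$, one observes that the proofs of Lemma \ref{Exc} and Theorem \ref{K=0} never use that the map is the Albanese morphism of $X$ itself, only that it is a fiber space onto an abelian variety for which the generic vanishing input of \cite{EL} still holds. Your approach, once the transfer step is repaired, reduces to the literal statement of Theorem \ref{K=0} at the cost of the extra work above; the paper's buys brevity at the cost of asking the reader to re-inspect the earlier proofs.
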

\begin{proof} By \cite{Ka1}, we have $\kappa(V)=0$ and hence $V$ is birational to its Albanese variety $A:=A(V)$. We may then replace $V$ by $A$. By Proposition \ref{TM}, we may assume that $X$ is smooth. Then we have $\alpha:X\rightarrow A$ an algebraic fiber space such that the general fiber $F$ has a good minimal model. As noted in the proof of Lemma \ref{Exc}, the argument in Lemma \ref{Exc} and Theorem \ref{K=0} still works in this general case. Hence the corollary follows.
\end{proof}

\subsection{Iitaka Fibration}

\begin{thm}\label{IF} Let $X$ be a $\QQ$-factorial normal projective variety with non-negative Kodaira dimension and at most terminal singularities. Suppose the general fiber $F$ of the Iitaka fibration has a good minimal model, then $X$ has a good minimal model.
\end{thm}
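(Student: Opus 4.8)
The plan is to descend the problem to the base of the Iitaka fibration, where the relevant log canonical divisor is big, and then to invoke the general-type case of \cite{BCHM}. First I would reduce to $X$ smooth by Lemma \ref{TM}, and replace $X$ by a smooth birational model on which the Iitaka fibration is a genuine morphism $f:X\rightarrow Y$ with $Y$ smooth and $\dim Y=\kappa(X)$; by construction the general fiber $F$ satisfies $\kappa(F)=0$. Running a minimal model program with scaling over $Y$ and applying Proposition \ref{AFS}, I would produce a model $f':X'\rightarrow Y$ over $Y$ whose general fiber $F'$ is a good minimal model, and, exactly as in the proof of Theorem \ref{K=0}, arrange that $\mathbf{B}_{-}(K_{X'}/Y)$ contains no divisorial components. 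Since $\kappa(F')=\kappa(F)=0$ and $F'$ is a good minimal model, we obtain $K_{F'}\sim_\QQ 0$.

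The key step is to express $K_{X'}$ as a pullback from $Y$ up to a vertical divisor. Because $K_{F'}\sim_\QQ 0$, the sheaf $f'_*\ko_{X'}(mK_{X'/Y})$ is generically of rank one for $m$ sufficiently divisible, so there are a $\QQ$-divisor $L_Y$ on $Y$ and an effective vertical $\QQ$-divisor $E$ with
\begin{align*} K_{X'}\sim_\QQ (f')^*L_Y+E.
\end{align*}
As $\kappa(X')=\dim Y$ and $E$ is vertical, $L_Y$ is big. After absorbing into $L_Y$ any part of $E$ that contains an entire fiber over a prime divisor of $Y$, I may assume $E$ is degenerate in the sense of the Degenerate Divisors subsection. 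If $E\neq 0$, then by Lemma \ref{CONT} some component of $E$ lies in $\mathbf{B}_{-}(E/Y)=\mathbf{B}_{-}(K_{X'}/Y)$, the equality holding because $(f')^*L_Y$ is numerically trivial over $Y$; this contradicts the arrangement that $\mathbf{B}_{-}(K_{X'}/Y)$ has no divisorial components. Hence $E=0$ and $K_{X'}\sim_\QQ(f')^*L_Y$ with $L_Y$ big.

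It remains to make $K_{X'}$ nef and semiample. Since $K_{X'}\sim_\QQ(f')^*L_Y$ is numerically trivial over $Y$, every $K_{X'}$-negative extremal ray maps to an $L_Y$-negative ray on $Y$, so a $K_{X'}$-minimal model program descends to an $L_Y$-minimal model program on the base. By the canonical bundle formula (\cite{Ka1}) we may write $L_Y\sim_\QQ K_Y+B_Y+M_Y$, with $B_Y$ the effective discriminant and $M_Y$ the nef moduli part; using that the fibers $F'$ have good minimal models, $M_Y$ is semiample, so a general $M_Y'\in\abs{M_Y}_\QQ$ gives a klt pair $(Y,B_Y+M_Y')$ of log general type (as $L_Y$ is big). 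By \cite{BCHM} this pair has a good minimal model $Y\dashrightarrow Y^m$, and lifting the corresponding program to $X'$ produces a model $X^m$ with $K_{X^m}\sim_\QQ (f^m)^*(K_{Y^m}+B^m+M^m)$ semiample, so that $X^m$ is a good minimal model of $X$.

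The main obstacle is precisely this descent to and back from the base: establishing that the moduli part $M_Y$ is semiample in the good-minimal-model case, and then lifting the minimal model program for $(Y,B_Y+M_Y')$ to $X'$, checking that each step lifts to a $K_{X'}$-negative operation extracting no divisors and that the pullback expression $K_{X^m}\sim_\QQ (f^m)^*L_{Y^m}$ is preserved, so that the semiampleness achieved on $Y^m$ genuinely yields a semiample multiple of $K_{X^m}$. The vertical elimination via Lemma \ref{CONT} is the clean part and mirrors Theorem \ref{K=0}; the delicate work lies entirely on the base and in the compatibility of the canonical bundle formula with the birational modifications of $Y$.
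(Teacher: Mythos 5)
Your first half — reducing to a morphism $f:X'\rightarrow Y$ birational to the Iitaka fibration, running an MMP over $Y$ via Proposition \ref{AFS} so that $K_{F'}\sim_\QQ 0$ and $\mathbf{B}_{-}(K_{X'}/Y)$ has no divisorial component, and then killing the vertical degenerate part of $K_{X'}$ with Lemma \ref{CONT} — is exactly the paper's strategy. The genuine gap is in your final step. Having arrived at $K_{X'}\sim_\QQ (f')^*L_Y$ with $L_Y$ big, you invoke the canonical bundle formula $L_Y\sim_\QQ K_Y+B_Y+M_Y$ and assert that ``using that the fibers $F'$ have good minimal models, $M_Y$ is semiample.'' That assertion is not a theorem you can cite: semiampleness of the moduli part of an lc-trivial fibration (the b-semiampleness conjecture of Prokhorov--Shokurov) is open in general, and is not known to follow from the existence of good minimal models for the fibers; only b-nefness is available in this generality. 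The subsequent claim that the $(K_Y+B_Y+M_Y')$-MMP on the base ``lifts'' to a sequence of $K_{X'}$-negative, non-divisor-extracting operations on $X'$ preserving the pullback expression is also unjustified. So the argument stalls precisely at the point you yourself flag as ``the main obstacle.''

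The paper avoids the base entirely by exploiting finite generation of $R(K_X)$ from \cite{BCHM}: choosing $Y={\bf Proj}\,R(K_X)$ and a resolution on which $\mu^*|mdK_X|=|mM|+mF$ with $|mM|$ base point free, one gets $dK_{X'}\sim M+\Gamma$ where $M=f^*\mathcal{O}_Y(1)$ is \emph{already} semiample and $\Gamma=F+dE$ lies in $\mathbf{B}(K_{X'})$. The whole proof then reduces to showing $\Gamma=0$ on a suitable model, which is done by your own mechanism (horizontal part vanishes since $dK_F\sim_\QQ 0$; the vertical part is shown to be of insufficient fiber type and then excluded by Lemma \ref{CONT}). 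Note also that where you ``absorb into $L_Y$ any part of $E$ that contains an entire fiber,'' the paper instead proves directly that $\Gamma$ cannot contain all divisors dominating a codimension-one point $P$: since $\Gamma\subseteq\mathbf{B}(K_{X'})$ forces $f_*\mathcal{O}_{X'}(j\Gamma)=\mathcal{O}_Y$ for all $j$, such containment would give an inclusion $\mathcal{O}_Y(P)\subseteq\mathcal{O}_Y$ of reflexive sheaves, a contradiction; your absorption step is fixable but glosses over the $f'$-exceptional components sitting over $P$. The essential missing idea in your write-up is this use of ${\bf Proj}$ of the finitely generated canonical ring to make the ``moduli part'' semiample by construction, which is what lets the paper sidestep the open conjecture your route runs into.
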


\begin{proof}
The theorem is certainly true for the case $\kappa(X)=0$. For varieties of general type, the theorem follows from \cite{BCHM} and base point free theorem in \cite{KM}. Hence we may assume $0<\kappa(X)<{\rm dim}(X)$.

\smallskip

By \cite{BCHM}, $R(K_X)$ is a finitely generated $\CC$-algebra and hence there is an integer $d$ such that the truncated ring $R^{[d]}(K_X)$ is generated in degree $1$. Take a resolution $\mu:X'\rightarrow X$ of $X$ and $|dK_X|$, then
   \begin{itemize}
     \item $\mu^{*}|mdK_X|=|mM|+mF$  with $|mM|$ base point free and $mF\geq 0$ the fixed divisor for all $m>0$,
     \item $f:=\phi_{|M|}:X'\rightarrow Y$ is birationally equivalent to the Iitaka fibration,
     \item $K_{X'}=\mu^{*}K_X+E$ with $E$ effective and $\mu$-exceptional,
     \item $dK_{X'}\sim M+F+dE$ with $F+dE$ effective and $F+dE\subseteq {\bf B}(K_{X'})$.
   \end{itemize}

\smallskip

Write $\Gamma:=F+dE=\Gamma_{\rm hor}+\Gamma_{\rm ver}$ with respect to $f$. By Proposition \ref{AFS}, after running a minimal model program of $X'$ with scaling of an ample divisor over $Y$, we may assume that the general fiber of $f$ is a good minimal model. As the general fiber $F$ of $f$ has Kodaira dimension zero, we have $\Gamma_{\rm hor}|_F=(M+F+dE)|_F\sim (dK_{X'})|_F\sim dK_F\sim_\QQ 0$ and hence $\Gamma_{\rm hor}=0$. In particular, we may assume $F+dE$ consists of only vertical divisors. Note that the condition $F+dE\subseteq {\bf B}(K_{X'})$ still holds after steps of a minimal model program.

\smallskip

Consider $T$ an effective divisor with ${\rm Supp}(T)\subseteq {\rm Supp}(F+dE)$ and the exact sequences
   \begin{align*} 0\rightarrow f_*\mathcal{O}_{X'}((j-1)T)\rightarrow f_*\mathcal{O}_{X'}(jT)\rightarrow Q_j\rightarrow 0\
   \end{align*}
on $Y$ with $j\geq1$ and $Q_j$ the quotient. After tensoring with $\mathcal{O}_Y(k)$ for $k$ sufficiently large, we have $Q_j(k)$ is generated by global sections and $H^1(Y,f_*\mathcal{O}_{X'}((j-1)T)\otimes\mathcal{O}_Y(k))=0$. As $T\subseteq {\bf B}(K_{X'})$, we have
   \begin{align*}
     H^0(Y,f_*\mathcal{O}_{X'}(jT)\otimes\mathcal{O}_Y(k))=H^0(X',\mathcal{O}_{X'}(kM+jT))=H^0(X',\mathcal{O}_{X'}(kM))=H^0(Y,\mathcal{O}_Y(k))
   \end{align*}
for any $j\geq 0$ as $\mathcal{O}_{X'}(M)=f^*\mathcal{O}_Y(1)$. Hence the exact sequence of cohomological groups shows that $H^0(Y,Q_j(k))=0$ and then $Q_j=0$. In particular, $f_*\mathcal{O}_{X'}(jT)=\mathcal{O}_Y$ for any $j\geq0$. Suppose that $f_*(T)_{\rm red}=P$ is a codimension one point on $Y$ such that ${\rm Supp}(T)$ contains all divisors in $X'$ dominating $P$. Note that we can find a big open subset $U\subseteq Y$ such that the image of the exceptional divisors contained in $f^*(P)$ is disjoint from $U$ as it has codimension greater or equal to two. Then there is a positive integer $j$ such that $f_*\mathcal{O}_{X'}(jT)|_U\supseteq\mathcal{O}_Y(P)|_U$. Since $f_*\mathcal{O}_{X'}(jT)=\mathcal{O}_Y$ and $\mathcal{O}_Y(P)$ are reflexive, we have an inclusion $\mathcal{O}_Y(P)\subseteq\mathcal{O}_Y$ which is impossible. In particular, this shows that $F+dE$ is of insufficient fiber type over $Y$.

\smallskip

By Lemma \ref{CONT}, we can find a component of $F+dE$ which is contained in ${\bf B}_{-}(K_{X'}/Y)$. The same argument as in Theorem \ref{K=0} then says that this is impossible. Then $dK_{\tilde{X}}\sim {M}$ with $\mathcal{O}_{X'}(M)=f^*\mathcal{O}_Y(1)$ is base point free and hence $X'$ is a good minimal model of $X$ by Lemma \ref{TM} (as $\mu$ is a resolution of a terminal variety).
\end{proof}

\subsection{Albanese morphism}

\begin{thm}\label{Main} Let $X$ be a smooth projective variety with Albanese map $\alpha:X\rightarrow A:=Alb(X)$. If the general fiber $F$ of $\alpha$ has dimension no more than three with $\kappa(F)\geq 0$, then X has a good minimal model.
\end{thm}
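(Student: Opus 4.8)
The plan is to run a case analysis on $\kappa(X)$ and reduce each case to a result already in hand. First I would dispose of the regular case: if $q(X)=0$ then $A=Alb(X)$ is trivial, $\alpha$ is constant, $F=X$ has dimension at most three and $\kappa(X)=\kappa(F)\geq 0$, so $X$ has a good minimal model because good minimal models exist in dimension at most three. Hence I may assume $X$ is irregular, and I write $\alpha'\colon X\to Y$ for the Stein factorisation of $X\to\overline{\alpha(X)}\subseteq A$, with general fibre $F$ (the same as the general fibre of $\alpha$). Since $\dim F\leq 3$ and $\kappa(F)\geq 0$, this $F$ itself has a good minimal model, again by the three-dimensional case. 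Theorem~\ref{NONV} then gives $\kappa(X)\geq 0$, and I split into three ranges. If $\kappa(X)=0$, then $X$ (being smooth, hence $\QQ$-factorial and terminal) has a good minimal model by Theorem~\ref{K=0}, whose remaining hypothesis—that the general Albanese fibre has a good minimal model—has just been checked. If $\kappa(X)=\dim X$, then $X$ is of general type and a good minimal model is produced by \cite{BCHM} together with the base point free theorem of \cite{KM}. The substance of the proof is the intermediate range $0<\kappa(X)<\dim X$.

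In that range I would invoke Theorem~\ref{IF}: it suffices to show that the general fibre $G$ of the Iitaka fibration $\phi\colon X\dashrightarrow Z$ has a good minimal model, where I take $G$ smooth and recall $\kappa(G)=0$. To build a good minimal model of $G$ I use its inherited Albanese geometry. By functoriality $\alpha|_G$ factors through $Alb(G)$, and I form the Stein factorisation $G\to W\to\overline{\alpha(G)}\subseteq A$; the base $W$ is finite over a subvariety of the abelian variety $A$, hence is of maximal Albanese dimension (and a smooth model of it may be used). If $\alpha'$ is constant on the general $G$ then $G$ lies in an Albanese fibre, which is general as $G$ varies, so $\dim G\leq 3$ and $G$ has a good minimal model outright. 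Otherwise $W$ is positive dimensional, $G\to W$ is an algebraic fibre space, and its general fibre is $G\cap F$, the intersection of the general Iitaka fibre with a general Albanese fibre. I claim in the next step that $G\cap F$ is at most three dimensional with $\kappa(G\cap F)=0$; granting this, $G\cap F$ has a good minimal model by the three-dimensional case, and Corollary~\ref{K=0'} applied to $G\to W$ then yields a good minimal model of $G$, completing the reduction demanded by Theorem~\ref{IF}.

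The heart of the matter, and the step I expect to be hardest, is to control $G\cap F$. I would prove the sharp statement that $G\cap F$ is birational to the general fibre of the Iitaka fibration of $F$; this simultaneously gives $\dim(G\cap F)\leq\dim F\leq 3$ and $\kappa(G\cap F)=0$. Writing $\phi|_F\colon F\dashrightarrow\phi(F)$ for the restriction of the Iitaka fibration of $X$, whose general fibre is exactly $G\cap F$, the assertion is equivalent to the equality $\dim\phi(F)=\kappa(F)$. The inequality $\dim\phi(F)\leq\kappa(F)$ is easy addition, since $|mK_X|$ restricts on $F$ to a subsystem of $|mK_F|$. For the reverse inequality I would use that $Y$, being finite over a subvariety of $A$, has $\kappa(Y)\geq 0$: choosing an effective pluricanonical divisor on $Y$ and multiplying the relative pluricanonical sections in $H^0(X,mK_{X/Y})$ by its pullback produces sections of $mK_X$ whose restrictions to a general $F$ span the image of $H^0(X,mK_{X/Y})\to H^0(F,mK_F)$, a system defining the Iitaka fibration of $F$. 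The positivity ensuring that enough relative sections survive is Viehweg's weak positivity of $\alpha'_*\omega_{X/Y}^m$ \cite{V1}, and the resulting identification $\dim\phi(F)=\kappa(F)$ is a form of Kawamata's additivity $C^+_{n,m}$ for fibrations whose general fibre has a good minimal model over a base of non-negative Kodaira dimension \cite{Ka1}. This is the delicate point; granting it, $G\cap F$ has Kodaira dimension zero and dimension at most three, and the chain of reductions above finishes the proof.
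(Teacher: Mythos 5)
Your overall strategy is the same as the paper's: Theorem \ref{NONV} gives $\kappa(X)\geq 0$, Theorem \ref{IF} reduces everything to producing a good minimal model of the general Iitaka fibre $G$, and $G$ is then handled through its fibration over (a finite cover of) its image in $A$, whose fibres sit inside the general Albanese fibre $F$ and are therefore at most three-dimensional. The paper phrases the last step via \cite[Lemma 2.6]{CH}: $\alpha(G)$ is a translate of an abelian subvariety $K$ of $A$, so $G\to K$ factors through $alb_G$, the general Albanese fibre of $G$ is contained in $F$, and Theorem \ref{K=0} applies directly to $G$. Your detour through the Stein factorisation $G\to W$ and Corollary \ref{K=0'} is an acceptable variant of this.

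The problem is the step you yourself single out as the heart of the matter. The claim that $G\cap F$ is birational to the general Iitaka fibre of $F$, i.e.\ that $\dim\phi(F)=\kappa(F)$, is not established by your argument: weak positivity of $\alpha'_*\omega_{X/Y}^m$ only gives generic global generation of $S^k(\alpha'_*\omega_{X/Y}^m)\otimes\mathcal{O}_Y(H)$ for $H$ ample on $Y$ and $k\gg 0$, so the sections you produce live in $H^0(X,mkK_{X/Y}+\alpha'^*H)$; to convert these into sections of a plurigenus of $X$ you must trade $\alpha'^*H$ against a pluricanonical divisor of $Y$, which requires $K_Y$ big rather than merely $\kappa(Y)\geq 0$. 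As written, the inequality $\dim\phi(F)\geq\kappa(F)$ is therefore unproved, and this is a genuine gap in the step your argument leans on. Fortunately the full strength of that claim is never needed. The two facts you actually use are much cheaper: $\dim(G\cap F)\leq 3$ is immediate from the containment $G\cap F\subseteq F$, and $\kappa(G\cap F)\geq 0$ follows from $\kappa(G)=0$ by restricting a nonzero pluricanonical section of $G$ to the general fibre of $G\to W$ (which is legitimate since $K_G$ restricts to the canonical divisor of a general fibre and a general fibre is not contained in the zero locus of the section). Those two facts already give $G\cap F$ a good minimal model by three-dimensional MMP, which is all that Corollary \ref{K=0'} requires. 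With that substitution your proof closes and is in substance the proof in the paper.
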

\begin{proof} By Theorem \ref{NONV}, we have $\kappa(X)\geq 0$. Let $X\dashrightarrow Z$ be the Iitaka fibration and $X'\rightarrow X$ a resolution such that $X'\rightarrow Z$ is a morphism. By \cite[Lemma 2.6]{CH}, the image of the general fiber $X'_z$ of $X'\rightarrow Z$ over a general point $z\in Z$ is a translation of a fixed abelian subvariety $K$ in $A$ and $0\rightarrow K\rightarrow A\rightarrow Alb(Z)\rightarrow 0$ is exact. Consider the Albanese map $\alpha_z:X'_z\rightarrow Alb(X'_z)$ of $X'_z$ which is an algebraic fiber space as $\kappa(X'_z)=0$. As $K$ is an abelian variety, $X'_z\rightarrow K$ factors through $\alpha_z$ by a surjective map $Alb(X'_z)\rightarrow K$. In particular, the general fiber $F_z$ of $\alpha_z:X'_z\rightarrow Alb(X'_z)$ is contained in $F$ and hence has dimension no more than three. Then $X'_z$ has a good minimal model by Theorem \ref{K=0} as $F_z$ does. Since $X'_z$ is the general fiber of the Iitaka fibration of $X$, Theorem \ref{IF} implies $X$ also has a good minimal model.
\end{proof}

\begin{remark} In fact, we have
   \begin{align*} \dim(F_z)\leq\dim(F)-\kappa(X)+q(Z),
   \end{align*}
where $q(Z)$ is the irregularity of $Z$. Hence if $\dim(F)-\kappa(X)+q(Z)\leq 3$, then Theorem \ref{Main} still holds.
\end{remark}

\end{document}